\documentclass[12pt]{article}

\usepackage{amssymb}
\usepackage{amsmath}
\usepackage{amsthm}
\usepackage{hyperref}
\usepackage{mathrsfs}
\usepackage{bbm}
\usepackage{tikz-cd}
\usepackage[cal=boondox,scr=boondoxo]{mathalfa}
\usepackage{genyoungtabtikz}
\usepackage{ytableau}
\usepackage{xcolor}

\allowdisplaybreaks

{\theoremstyle{plain}%
 \newtheorem{theorem}{Theorem}

 \newtheorem{lemma}{Lemma}% 
}
{\theoremstyle{remark}
\newtheorem{remark}{Remark}
}
{\theoremstyle{definition}
\newtheorem{definition}{Definition}
\newtheorem{example}{Example}
}

\begin{document}

\begin{center}
 {\Large Kronecker coefficients via the Giambelli identity for Schur functions}

 \ 

 {\textsc{John M. Campbell}} 

 \vspace{0.1in}

 {\footnotesize Department of Mathematics and Statistics}

 {\footnotesize Dalhousie University}

 {\footnotesize Halifax, NS B3H 4R2}

 {\footnotesize Canada}

 \vspace{0.1in}

 {\footnotesize {\tt jh241966@dal.ca}}

 \vspace{0.1in}

\end{center}

\begin{abstract}
 One of the central open problems in both algebraic combinatorics and representation theory is to find a positive combinatorial rule for 
 Kronecker coefficients $ g_{\lambda \, \mu \, \nu}$. A notable advance in this direction is due to Blasiak, who proved a combinatorial 
 interpretation in terms of colored Yamanouchi tableaux for the case whereby one of the indexing partitions is hook-shaped. In this paper, 
 we introduce a framework for the evaluation and combinatorial interpretation of Kronecker coefficients, combining a Schur function 
 identity of Littlewood, the Giambelli identity for Schur functions, and Blasiak's combinatorial rule. This framework reduces the study of 
 Kronecker coefficients to alternating sums involving hook-indexed cases. As an application of this framework, we obtain combinatorial 
 interpretations of $g_{t, h^{(1)}, h^{(2)}}$ for two-row partitions $t$ and hook-like partitions $h^{(1)}$ and $h^{(2)}$ satisfying natural 
 conditions. More broadly, our approach provides a systematic method for extending hook-based combinatorial rules to wider families 
 of Kronecker coefficients.
\end{abstract}

\noindent {\footnotesize \emph{MSC:} 05E10, 20C30}

\vspace{0.1in}

\noindent {\footnotesize \emph{Keywords:} Kronecker coefficient, Kronecker product, Kronecker problem, Schur function, integer
 partition, Specht module, Littlewood--Richardson coefficient, Littlewood--Richardson tableau, simple module, symmetric function, Hopf 
 algebra, coproduct, mixed insertion, symmetric group, Hall inner product}

\section{Introduction}
 Let $\operatorname{GL}_{n}(\mathbb{C})$ denote the general linear group
 of order $n$ over $\mathbb{C}$. For integer partitions $\lambda$, $\mu$, 
 and $\nu$, \emph{Littlewood--Richardson (LR) coefficients} $c_{\mu \, \nu}^{\lambda}$ may be 
 defined via decompositions of (twofold) tensor products of irreducible polynomial $\operatorname{GL}_n(\mathbb{C})$-modules, with 
\begin{equation}\label{VtensorV}
 V^{\mu} \otimes V^{\nu} \cong \bigoplus_{\lambda} c_{\mu \, \nu}^{\lambda} V^{\lambda} 
\end{equation}
 holding in the stable range (i.e., for $n$ sufficiently large so that all partitions involved satisfy $\ell(\lambda), \ell(\mu), \ell(\nu) \le n$). 
 For partitions $\mu \vdash m$ and $\nu \vdash n$, the external tensor product $S^{\mu} \boxtimes S^{\nu}$ of Specht 
 modules $S^{\mu}$ and $S^{\nu}$ defines a representation of the direct product
 $\mathfrak{S}_m \times \mathfrak{S}_n$ of symmetric groups $\mathfrak{S}_{m}$ and $\mathfrak{S}_{n}$. 
 Inducing this representation to $\mathfrak{S}_{m+n}$ yields 
\begin{equation}\label{outerprod}
 \operatorname{Ind}_{\mathfrak{S}_{m} \times \mathfrak{S}_{n}}^{\mathfrak{S}_{m+n}}\big( S^{\mu} \boxtimes S^{\nu} \big) \cong \bigoplus_{\lambda } c_{\mu \, \nu}^{\lambda} S^{\lambda}.
\end{equation}
 The decompositions in \eqref{VtensorV} and \eqref{outerprod} correspond under the Frobenius characteristic map or, equivalently, via 
 Schur--Weyl duality. The structure constants arising in \eqref{VtensorV} and \eqref{outerprod} may equivalently be defined so that 
\begin{equation}\label{ssprod}
 s_{\mu} s_{\nu} = \sum_{\lambda} c_{\mu \, \nu}^{\lambda} s_{\lambda}, 
\end{equation}
 for the Schur basis $\{ s_{\lambda} \}_{\lambda}$ of the algebra $\textsf{Sym}$ of symmetric functions. \emph{Kronecker coefficients} 
 $g_{\lambda \, \mu \, \nu}$, which were introduced by Murnaghan in 1938 \cite{Murnaghan1938} (cf.\ \cite{Murnaghan1956}), 
 may be defined by analogy with \eqref{VtensorV} and via decompositions of (twofold) tensor products of Specht modules, with 
\begin{equation}\label{SpechtSpecht}
 S^{\mu} \otimes S^{\nu} \cong \bigoplus_{\lambda} g_{\lambda \, \mu \, \nu} S^{\lambda} 
\end{equation}
 for integer partitions $\mu$ and $\nu$ such that $|\mu| = |\nu|$. Writing $\operatorname{GL}_n = \operatorname{GL}_n(\mathbb{C})$, 
 we have, by direct analogy with \eqref{outerprod}, that Kronecker 
 coefficients arise as the multiplicities given by the restriction of irreducible representations from $\operatorname{GL}_{mn}$ to
 $\operatorname{GL}_{n} \times \operatorname{GL}_{m}$ (associated with the canonical morphism
 $\operatorname{GL}_{n} \times \operatorname{GL}_{m} \to \operatorname{GL}_{n m}$ whereby $(g, h) \mapsto g \otimes h$), 
 with $$ \operatorname{Res}^{\operatorname{GL}_{mn} }_{ \operatorname{GL}_n
 \times \operatorname{GL}_m }(V^{\lambda})
 \cong \bigoplus_{\mu, \nu} g_{\lambda \, \mu \, \nu} \big( V^{\mu} \otimes V^{\nu} \big). $$
 In a similar spirit, the appropriate analogue of \eqref{ssprod} involving Kronecker coefficients is such that 
\begin{equation}\label{sasts}
 s_{\lambda} \ast s_{\mu} = \sum_{\nu} g_{\lambda \, \mu \, \nu} s_{\nu} 
\end{equation}
 for the Kronecker/internal product $\ast$ on $\textsf{Sym}$, so that the left-hand side in \eqref{sasts} corresponds to the 
 Frobenius characteristic of the left-hand side of \eqref{SpechtSpecht}. 

 A combinatorial interpretation of $c_{\mu \, \nu}^{\lambda}$, i.e., in terms of \emph{Littlewood--Ri-chardson tableaux}, was developed 
 over much of the $20^{\text{th}}$ century \cite[pp.\ 176--177]{Sagan2001}, culminating with full proofs of this interpretation, 
 which traces back to a seminal 1934 paper from Littlewood and Richardson \cite{LittlewoodRichardson1934}, in the 
 1970s attributed to Thomas \cite{Thomas1974,Thomas1978} and to Sch\"utzenberger~\cite{Schutzenberger1977}. As suggested by 
 Lascoux \cite{Lascoux1980}, it is remarkable how the multiplicities arising in decompositions of \emph{outer} products on Specht
 modules, as in \eqref{outerprod}, have a well known interpretation with a long history, in contrast to how there is no known 
 combinatorial interpretation for the corresponding multiplicities associated with \emph{inner} products as in \eqref{SpechtSpecht}. 
 Indeed, the problem of determining such a combinatorial interpretation is, to this day, one of the most important open problems in 
 representation theory and in algebraic combinatorics. This problem is often referred to as the \emph{Kronecker problem}. 
 We introduce a framework for the evaluation and combinatorial interpretation of Kronecker coefficients, based on a combination of the 
 Giambelli identity \cite{Giambelli1903} (cf.\ \cite[pp.\ 47, 61]{Macdonald1995}), a compatibility identity of Littlewood 
 \cite{Littlewood1956} relating the coproduct and the Kronecker product on $\textsf{Sym}$, and Blasiak's combinatorial rule for
 hook shapes \cite{Blasiak20162018}. This framework provides a systematic reduction of Kronecker coefficients indexed by 
 general partitions to alternating sums involving hook-indexed Kronecker coefficients and Littlewood--Richardson coefficients. 
 Although we focus on specific families of partition shapes in this paper, the underlying reduction mechanism applies more broadly
 and suggests a general approach to extending hook-based combinatorial rules.

 The character version of the Specht module decomposition highlighted in \eqref{SpechtSpecht} is such that 
\begin{equation}\label{charver}
 \chi^{\mu} \chi^{\nu} = \sum_{\lambda} g_{\lambda \, \mu \, \nu} \chi^{\lambda}, 
\end{equation}
 and we find that \eqref{charver} is equivalent to 
\begin{equation}\label{gtochar}
 g_{\lambda \, \mu \, \nu} = \frac{1}{n!} \sum_{\sigma \in \mathfrak{S}_{n}} \chi^{\lambda}(\sigma) \chi^{\mu}(\sigma) \chi^{\nu}(\sigma). 
\end{equation}
 As emphasized by Lascoux \cite{Lascoux1980}, by writing a given Kronecker coefficient as a combination of irreducible characters of 
 $\mathfrak{S}_{n}$, i.e., according to the relation in \eqref{gtochar}, this approach is computationally inefficient and requires full 
 evaluations for all of the entries in the character table for $\mathfrak{S}_{n}$. Moreover, the expansion in \eqref{gtochar} does not seem to provide 
 much in the way of insight when it comes to positivity properties of or combinatorial interpretations of $g_{\lambda \, \mu \, \nu}$. 
 This highlights the interest in our Giambelli--Littlewood--Blasiak-based approach to the Kronecker problem.

\subsection{Outline}\label{secFrame}
 Let $\lambda$ be an integer partition written in Frobenius notation so that $\lambda = (\alpha_1, \alpha_2, \ldots, \alpha_d \, | \, \beta_1, 
 \beta_2, \ldots, \beta_d)$, for the number $d$ of diagonal boxes of $\lambda$, and where $\alpha_i$ and $\beta_i$, respectively, denote 
 the arm and leg length of the $i^{\text{th}}$ box, for $i \in \{ 1, 2, \ldots, d \}$. The Giambelli identity may then be formulated so that 
\begin{equation}\label{origGiambelli}
 s_{\lambda} = \det\left( s_{\left( \alpha_i + 1, 1^{\beta_j} \right) } \right)_{1 \leq i, j \leq d}. 
\end{equation} 
 Rewriting \eqref{origGiambelli} using the Leibniz formula, we find that 
\begin{equation}\label{rewriteGiambelli}
 s_{\lambda} = \sum_{\sigma \in \mathfrak{S}_{d}} \operatorname{sgn}(\sigma) \prod_{i=1}^{d} s_{\left( \alpha_i + 1, 1^{\beta_{\sigma(i)}} \right)}. 
\end{equation}

 The identity 
\begin{equation}\label{mainLittlewood}
 (s_{\lambda} s_{\mu}) \ast s_{\nu} = \sum_{ \substack{\tau \vdash |\lambda| \\ \eta \vdash |\mu|} } c_{\tau \, \eta}^{\nu} (s_{\tau} 
 \ast s_{\lambda}) (s_{\eta} \ast s_{\mu}) 
\end{equation}
 for partitions $\lambda$, $\mu$, and $\nu$ is originally due to Littlewood in 1956~\cite{Littlewood1956}. The identity 
 \eqref{mainLittlewood} can be interpreted as a compatibility relation $ (fg) \ast h = \sum (f \ast h_{(1)}) (g \ast h_{(2)})$ between the 
 Kronecker product and the coproduct $\Delta$ on $\textsf{Sym}$, using Sweedler notation and writing $\Delta(h) = \sum h_{(1)} 
 \otimes h_{(2)}$. A key to our construction in Section \ref{secNear} is given by applications of the Littlewood rule in 
 \eqref{mainLittlewood}. More precisely, we rewrite a Schur function using the Giambelli--Leibniz expansion in 
 \eqref{rewriteGiambelli} and then apply the Littlewood identity in \eqref{mainLittlewood} to expand the Kronecker product with a 
 fixed Schur function. Extracting coefficients yields an expression of a Kronecker coefficient (in which one of the indexing partitions 
 is a non-hook) as an alternating sum involving Littlewood--Richardson coefficients and hook-indexed Kronecker coefficients. The 
 latter admit combinatorial interpretations via Blasiak's rule \cite{Blasiak20162018}, and we then reduce the resulting alternating sum in a 
 cancellation-free manner to obtain new combinatorial formulas.

 We define a \emph{near-hook} as an integer partition of the form $(a, b, 1^c)$ for $a \geq b$, providing a natural extension of 
 hook-partitions. For the case of Schur functions indexed by near-hooks, the Giambelli--Leibniz expansion gives us that 
\begin{equation}\label{GLgives}
 s_{(a, b, 1^c)} = s_{(a, 1^{c+1})} s_{(b-1)} - s_{(a)} s_{(b-1, 1^{c+1})}. 
\end{equation}
 Applying the above steps (as made more explicit in Section \ref{secNear}), we obtain an alternating sum of the form 
\begin{equation}\label{nonexplicitsum}
 g_{\lambda \, (a, b, 1^{c}) \, \beta} = \sum_{ \eta, \delta, \theta } 
 c_{\eta \, \delta}^{\beta} g_{\theta \, (a, 1^{c+1}) \, \eta } c^{\lambda}_{\theta \, \delta} 
 - \sum_{ \eta, \delta, \theta } 
 c_{\eta \, \delta}^{\beta} g_{\theta \, (b-1, 1^{c+1}) \, \delta } c^{\lambda}_{\eta \, \theta}, 
\end{equation}
 and an explicit version of this expansion is highlighted in Theorem \ref{fundamentaltheorem}. Since both of the expressions 
 $g_{\theta \, (a, 1^{c+1}) \, \eta } $ and $g_{\theta \, (b-1, 1^{c+1}) \, \delta }$ involved within the summands in \eqref{nonexplicitsum} 
 admit combinatorial interpretations due to Blasiak \cite{Blasiak20162018}, the determination of a combinatorial interpretation for the 
 left-hand side of \eqref{nonexplicitsum}, for a given or restricted shape $\lambda$, reduces to the determination of cancellations 
 that can be formulated via bijections defined using
 LR/Blasiak tableaux. 

 For the purposes of this paper, we restrict the application of our technique to special cases of \eqref{nonexplicitsum}. However, the 
 above framework, relying on a threefold application of the Giambelli--Leibniz expansion, Littlewood's rule, and Blasiak's combinatorial 
 interpretation, can be applied much more broadly. Conceptually, this framework reduces the Kronecker problem to the analysis of 
 cancellation phenomena in structured families of Littlewood--Richardson and Blasiak tableaux.

 One might think that sums as in \eqref{nonexplicitsum} would be intractable in terms of requiring large or infeasible amounts of 
 cancellation, in order to determine explicit and cancellation-free formulas from \eqref{nonexplicitsum}, especially since these kinds of 
 problems often arise in algebraic combinatorics (as explored, notably, by Benedetti and Sagan \cite{BenedettiSagan2017} in a different 
 context). However, by restricting the superscripting partition of an LR coefficient, sums as in \eqref{nonexplicitsum} can be shown to 
 reduce/``collapse'' in remarkable ways, in regard to the nonzero terms that remain. 
 The remarkable ways in which sums as in \eqref{nonexplicitsum} simplify illustrate the 
 versatility and usefulness of our Giambelli--Littlewood--Blasiak-based approach. 

\subsection{Organization}
 We provide an outline of our new framework for evaluating and combinatorially interpreting Kronecker coefficients in Section 
 \ref{secFrame} (referring to Section \ref{secprelim} for preliminaries on notation and terminology). This is followed by Section 
 \ref{secBack}, which gives further background material, in addition to the background material covered above. 
 Our main results are given in Sections \ref{secNear} and \ref{secinter}. 

\section{Background and context}\label{secBack}

\subsection{Survey}\label{secSurvey}
 The following selection highlights families of partition shapes for which Kronecker coefficients admit explicit evaluations or combinatorial 
 interpretations, emphasizing those most closely related to our framework.

 \ 

\noindent \textbf{Hooks:} $ g_{h^{(1)} \, h^{(2)} \, \nu} $ for hooks $h^{(1)}$ and $h^{(2)}$ 
 \cite{GarsiaRemmel1985,Lascoux1980,Remmel1989,Rosas2001}; $ g_{t \, h \, \nu} $ for a two-rowed partition $t$ and a hook $h$ 
 \cite{BowmanDeVisscherOrellana2015,Remmel1992}; $ g_{(2^{a}, 1^{b}) \, h \, \nu} $ for
 a hook $h$ \cite{Remmel1992}; 
 $g_{\lambda \, h \, \mu }$ for a hook $h$ \cite{Blasiak20162018}. 

 \ 

\noindent \textbf{Two-row and near-two-row shapes:} $ g_{t^{(1)} \, t^{(2)} \, \nu } $ for two-rowed partitions $t^{(1)}$ and $t^{(2)}$ 
 \cite{BriandOrellanaRosas2009,RemmelWhitehead1994,Rosas2001}; $g_{(a, b) \, \mu \, \nu }$ for $\mu_1 - \mu_2 \geq 2 
 b$ \cite{BallantineOrellana2005,BallantineOrellana200507}; $g_{(a, a) \, (a+b,a-b) \, \nu }$ \cite{BrownvanWilligenburgZabrocki2010}; 
 $g_{(a, a) \, (a, a) \, \nu}$ \cite{GarsiaWallachXinZabrocki2012}; $g_{ (n, n-1, 1) \, (n, n) \, \nu }$ for $n \geq 2$ \cite{Tewari2015,TewariMSc}; 
$g_{ (n-1, n-1, 1) \, (n, n-1) \, \nu }$ for $n \geq 2$ \cite{Tewari2015,TewariMSc}; 
$g_{ (n-1, n-1, 2) \, (n, n) \, \nu }$ for $n \geq 3$ \cite{Tewari2015,TewariMSc}; 
$g_{ (n-1, n-1, 1,1) \, (n, n) \, \nu }$ for $n \geq 2$ \cite{Tewari2015,TewariMSc}; 
$g_{ (n, n, 1) \, (n, n, 1) \, \nu }$ for $n \geq 2$ \cite{Tewari2015,TewariMSc}. 

 \ 

\noindent \textbf{Rectangular and near-rectangular shapes:} $ g_{(n-1, 1) \, r \, \nu } $ for a rectangle $r$ \cite{BessenrodtKleshchev1999}; 
 $g_{r \, r \, \nu}$ for rectangular partitions $r$\footnote{See also \url{https://arxiv.org/abs/2511.02312}.} \cite{Manivel2011}; $ g_{r \, r \, 
 t } $ for rectangular partitions $r$ and for a partition $t$ with at most 
 two rows \cite{Vallejo2014}; 
$g_{(a^b) \, (a b - c, 1^c) \, \nu}$ for $c > d+1$ \cite{BallantineHallahan2016}; 
$g_{r^{(1)} \, r^{(2)} \, r^{(3)} }$ for certain rectangular partitions $r^{(1)}$, 
 $r^{(2)}$, and $r^{(3)}$ \cite{AmanovYeliussizov2023}; 
$g_{a^a \, a^a \, \nu}$ \cite{Zhao2024}. 

 \ 

\noindent The above catalogue emphasizes how there is no general mechanism to extend hook formulas to broader classes. Our 
 Giambelli--Littlewood--Blasiak-based approach fills this gap. This, combined with the work of Rosas \cite{Rosas2001}, yields explicit 
 combinatorial formulas for $g_{t, n^{(1)}, n^{(2)}}$ for naturally occurring families of two-row partitions $t$ and hook-like partitions 
 $n^{(1)}$ and $n^{(2)}$. 

\subsection{Kronecker coefficients and Giambelli's identity}
 The Giambelli identity, as given in Section \ref{secFrame} below, has previously been applied in the study of Kronecker coefficients, but in 
 a different way compared to our methods and results. In the work of Pak et al.\ \cite{PakPanovaVallejo2016} (cf.\ \cite{PakPanova2017}), 
 the Giambelli determinantal identity is used to rewrite Schur functions in terms of hook-shaped components, which are applied in the 
 study of character values and associated asymptotic properties using the Murnaghan--Nakayama rule. While Pak et al.\ use the 
 Giambelli identity as a tool for facilitating character computations, our work uses Giambelli’s formula in a more structural and 
 fundamental way to obtain explicit expansions of Kronecker coefficients themselves. Our applications of Giambelli's identity may also be 
 seen in relation to E\u gecio\u glu and Remmel's combinatorial proof of this identity \cite{EgeciogluRemmel1988}, as their 
 combinatorial proof helps to explain how the structure of the Giambelli expansions can be applied in relation to our goals. 

\subsection{Kronecker coefficients and Littlewood's rule}
 What we refer to as \emph{Littlewood's rule}, as given in \eqref{mainLittlewood}, naturally arises in the context of Kronecker coefficients, 
 as it provides an expansion for mixed products of Schur functions involving the Kronecker and usual products on $\textsf{Sym}$. Our 
 work may be viewed as complementary to that of Tewari \cite{Tewari2015}, who studies near-rectangular shapes, whereas we focus on 
 near-hook shapes. In both settings, Littlewood’s identity in \eqref{mainLittlewood} is used to derive and reduce alternating sums of 
 Schur functions, although the combinatorial structures involved differ significantly.

 There are many further and notable works on Kronecker coefficients involving the manipulation of sums derived from Littlewood's rule. 
 For example, Pak and Panova's paper \cite{PakPanova2014} on unimodality properties related to Kronecker products connects with our 
 work in terms of how the Littlewood identity in \eqref{mainLittlewood} is key to the derivations of their main results, i.e., via 
 manipulations of sums obtained from Littlewood's rule, producing alternating sums used, in turn, to obtain unimodality properties. 
 This reflects how we apply Littlewood's rule via alternating sums as in \eqref{nonexplicitsum}. 

\subsection{Blasiak's combinatorial interpretation}
 Mixed insertion, as introduced by Haiman \cite{Haiman1989}, generalizes Schensted insertion and may be seen as governing the 
 combinatorics of shifted tableaux and Hall--Littlewood functions at $t = -1$. 
 Haiman's introduction of mixed insertion laid the foundations for the work of Blasiak 
 \cite{Blasiak20162018}, which is central to our work. Our new applications of Blasiak's combinatorial interpretation of $g_{\lambda 
 \, h \, \mu}$ for a hook $h$ may also be seen in relation to how Haiman's work builds upon classical versions of \emph{jeu-de-taquin}. 
 Based on extant research concerning Blasiak's combinatorial interpretation, 
 our approaches (via Giambelli's identity, according to Theorem \ref{fundamentaltheorem}), as below, toward the application of 
 this combinatorial interpretation have not been considered previously in any meaningful way. 
 Full preliminaries on Blasiak's interpretation are given in Section \ref{secBlasiakinter}. 

\subsection{Applications related to Kronecker coefficients}
 Kronecker coefficients are considered as providing a cornerstone of the discipline of geometric complexity theory 
 \cite{BowmanDeVisscherEnyang2021}. As Bowman et al.\ also noted, Kronecker coefficients are also closely linked with properties 
 of both the spectra of quantum states \cite{ChristandlHarrowMitchison2007,ChristandlMitchison2006} and quantum entropies 
 \cite{ChristandlSahinogluWalter2018}. The problem of combinatorially interpreting Kronecker coefficients in relation to quantum systems
 has also been considered by Brown et al.\ \cite{BrownvanWilligenburgZabrocki2010}, who described how such interpretations are useful 
 in relation to computations involved in physics-based problems concerning quantum-mechanical systems known as \emph{qubits}. 
 More broadly, since Kronecker coefficients arise via the decomposition of tensor products of irreducible representations of symmetric 
 groups, such coefficients naturally arise in settings that involve both symmetry/permutations and tensoring. This may be considered in 
 relation to how Kronecker products arise in areas such as the geometry of flag varieties and invariant theory. 

 In addition to the references cited above, there has been a variety of different research developments, over recent years, on Kronecker 
 coefficients and their uses in relation to many different areas and topics in mathematics. Such research works have, for example, 
 concerned the decomposition of generalized Foulkes modules \cite{HegedusMadireddi2025} and log-concavity properties of 
 Kronecker coefficients \cite{Gui2025}. Additional progress concerns the Newton polytope of Kronecker products of Schur functions 
 \cite{PanovaZhao2024,PanovaZhao2025}, analogues of Kronecker coefficients for symmetric inverse semigroups 
 \cite{MazorchukSrivastava2025}, how the problem of evaluating Kronecker coefficients is equivalent to the problem of evaluating 
 reduced Kronecker coefficients~\cite{IkenmeyerPanova2024,IkenmeyerPanova2024Sem}, and the study of Kronecker coefficients via the 
 use of signed sums of vector partition function evaluations \cite{MishnaTrandafir2024}. 

\section{Preliminaries}\label{secprelim}
 For brevity, we assume familiarity with basic definitions associated with combinatorial objects such as integer partitions, Young diagrams 
 and partition tableaux, skew diagrams, etc., referring to many of the cited references below for details. 

\subsection{The self-dual Hopf algebra structure on $\textsf{Sym}$}
 The algebra $\textsf{Sym}$ of symmetric functions may be defined, for our purposes, as the free $\mathbb{Q}$-algebra with one 
 generator $h_{n}$ in each degree $n \in \mathbb{N}$, writing 
\begin{equation}\label{displaySym}
 \textsf{Sym} = \mathbb{Q}[h_1, h_2, \ldots] 
\end{equation}
 and referring to Macdonald's text for background \cite{Macdonald1995} and 
 for a more detailed construction of $\textsf{Sym}$, and for 
 details on aspects about $\textsf{Sym}$ not directly used in our work. The definition in \eqref{displaySym} gives rise to the notational 
 shorthand whereby $h_{\lambda} = h_{\lambda_{1}} h_{\lambda_{2}} \cdots h_{\lambda_{\ell(\lambda)}}$ for an integer partition 
 $\lambda$, giving rise to the \emph{complete homogeneous basis} $\{ h_{\lambda} \}_{\lambda}$ of $\textsf{Sym}$, letting this basis 
 be indexed by the family of integer partitions, and writing $h_{()} = h_{0} = 1$ for the unique empty partition $()$. Again from the 
 definition in \eqref{displaySym}, we find that the product $h_{\lambda} h_{\mu}$ of two complete homogeneous basis elements is 
 the $h$-basis element $h_{\nu}$ indexed by the partition $\nu$ obtained by sorting the concatenation of $\lambda$ and $\mu$. By 
 writing $\textsf{Sym}_{n}$ in place of the $\mathbb{Q}$-space spanned by $\{ h_{\lambda} : \lambda \vdash n \}$, we thus obtain the 
 graded algebra decomposition 
\begin{equation}\label{Symgraded}
 \textsf{Sym} = \bigoplus_{n=0}^{\infty} \textsf{Sym}_{n}. 
\end{equation}

 A key tool used in our work is the above referenced Giambelli identity, which expresses Schur functions determinantally, in a way that is 
 closely related to the \emph{Jacobi--Trudi rule} 
\begin{equation}\label{displayJT}
 s_{\lambda} = \det\left( h_{\lambda_i - i +j} \right)_{1 \leq i, j \leq \ell(\lambda)}, 
\end{equation}
 with the understanding that $h_{-n}$ vanishes for $-n < 0$. The relation in \eqref{displayJT} allows us to define the \emph{Schur basis}
 $\{ s_{\lambda} \}_{\lambda}$, which is typically regarded as \emph{the} basis of $\textsf{Sym}$. 

 The algebra $\textsf{Sym}$ is typically endowed with a bialgebra structure and a Hopf algebra structure, with a coproduct 
 $\Delta\colon \textsf{Sym} \to \textsf{Sym} \otimes \textsf{Sym}$ that may be defined so that 
\begin{equation}\label{Deltah}
 \Delta h_{n} = h_{0} \otimes h_{n} + h_{1} \otimes h_{n-1} + \cdots + h_{n} \otimes h_{0}, 
\end{equation}
 with \eqref{Deltah} being extended so that $\Delta$ is compatible with the multiplicative operation on $\textsf{Sym}$. 

 As in the work of Rosas \cite{Rosas2001} (which is heavily used in Section \ref{twonearhook} below), our construction heavily relies on 
 the use of indicator functions, writing $$ \text{{\bf 1}}_{P} = \begin{cases} 1, & \text{if $P$ is true;} \\
 0, & \text{otherwise,} \end{cases} $$ for a given proposition $P$ (thus generalizing the Kronecker delta function). This leads us to define 
 the \emph{Hall scalar product} or \emph{Hall inner product} $\langle \cdot, \cdot \rangle$ 
 according to the relation 
\begin{equation}\label{Schurdual}
 \langle s_{\lambda}, s_{\mu} \rangle = \text{{\bf 1}}_{\lambda = \mu}, 
\end{equation}
 with \eqref{Schurdual} giving us the 
 orthonormality of the Schur basis. 
 The self-duality relation in \eqref{Schurdual} together with the product rule in 
 \eqref{ssprod} yield the coproduct rule such that 
 $ \Delta s_{\lambda} = \sum_{\mu, \nu} c_{\mu \, \nu}^{\lambda} s_{\mu} \otimes s_{\nu}$. 

 By endowing, as above, the graded algebra structure in \eqref{Symgraded} with the coproduct defined via \eqref{Deltah}, this gives
 $\textsf{Sym}$ the structure of both a bialgebra and a Hopf algebra. The interested reader may consult a number of standard, general 
 textbook references on Hopf algebras \cite{Abe1980,CartierPatras2021,Montgomery1993,Sweedler1969}. A paradigm shift given by 
 connecting the discipline of combinatorics with properties of coalgebras and bialgebras is given by the work of Joni and Rota 
 \cite{JoniRota1979}, and this helped to lay the foundation for the introduction of the concept of a \emph{Combinatorial Hopf 
 Algebra} (CHA) \cite{AguiarBergeronSottile2006}, with CHA structures on symmetric functions and generalizations being central 
 in the study and application of CHAs, which forms a major part of modern-day algebraic combinatorics. 

 A crucial component to our technique below is related to how the multiplicative operation on $\textsf{Sym}$ relates to the operations
 $\ast$ and $\Delta$ on $\textsf{Sym}$, according to the Littlewood relation in \eqref{mainLittlewood}. For further background material 
 on the Hopf algebra $\textsf{Sym}$, in addition to Macdonald's text \cite{Macdonald1995}, one may also consider the appropriate 
 text by Aguiar and Mahajan \cite{AguiarMahajan2010}. 

\subsection{Littlewood--Richardson coefficients}
 Recall the definition for LR coefficients given in \eqref{ssprod}. 

\begin{example}
 We adopt the convention whereby tuples of single-digit integers may be written as words (i.e., in the monoid $\{ 0, 1, \ldots, 
 9 \}^{\ast}$). From the expansion 
\begin{multline*}
 s_{321} s_{21} = s_{32211} + s_{3222} + s_{33111} + 2 s_{3321} + s_{333} + s_{42111} + \\ 
 2 s_{4221} + 2 s_{4311} + 2 s_{432} + s_{441} + s_{5211} + s_{522} + s_{531}, 
\end{multline*}
 we can see, for example, that $ c_{321 \, 21}^{4311} = 2$. 
\end{example}

 The standard combinatorial interpretation for LR coefficients requires, for our purposes, the following definitions. 

\begin{definition}\label{defineSSYT}
 A \emph{semistandard Young tableau (SSYT)} is a tableau of skew shape $\lambda / \mu$ labeled with natural numbers such that the 
 labels are weakly increasing along the rows and such that the labels are strictly increasing down the columns. 
\end{definition} 

\begin{example}
 The tableau
\begin{equation}\label{firstLRtab}
 \begin{ytableau}
 \none & \none & \none & \none & 1 \\
\none & \none & 1 & 1 \\
1 & 2 \\ 
 3 \\ 
\end{ytableau}
\end{equation}
 satisfies the given conditions in Definition \ref{defineSSYT}, with a skew shape of $5421/42$. 
\end{example}

\begin{definition}
 Let $T$ be a SSYT with $m$ as its maximal label. Then the \emph{weight} associated with $T$ is the tuple $(t_1, t_2, \ldots, t_m)$ such 
 that $t_i$ is the number of labels equal to $i$ appearing in $T$ for $i \in \{ 1, 2, \ldots, m \}$. 
\end{definition}

\begin{example}
 The weight of the SSYT displayed in \eqref{firstLRtab} is $411$. 
\end{example}

\begin{definition}
 The \emph{reading word} associated with a SSYT $T$ is formed by reading the labels of $T$ row by row, from right to left, starting with 
 the top-right label and proceeding downwards. 
\end{definition}

\begin{example}\label{exreadword}
 The reading word for \eqref{firstLRtab} is $111213$.
 \end{example}

\begin{definition}\label{defineYama}
 A word $w$ is said to satisfy the \emph{Yamanouchi condition} if, for each initial subword of $w$, 
 the number of $i$'s is at least the number of $(i+1)$'s, for each $i \geq 1$.
\end{definition}

\begin{example}
 We find that the reading word evaluated in Example \ref{exreadword} satisfies the condition in Definition \ref{defineYama}. For example, 
 the prefix $1112$ of this reading word is such that the number of $1$'s is at least the number of $2$'s. 
\end{example}

 A standard combinatorial interpretation of $c_{\mu \, \nu}^{\lambda}$ is such that $ c_{\mu \, \nu}^{\lambda}$ is the number of SSYTs 
 of shape $\lambda / \mu$ with weight $\nu$ and with a reading word satisfying the Yamanouchi condition. For the purposes of this 
 paper, a \emph{Littlewood--Richardson tableau} is a tableau of this form (cf.\ 
 \cite[p.\ 143]{Macdonald1995} \cite[p.\ 177]{Sagan2001}). 

\begin{example}
 We find that $ c^{5421}_{42 \, 411} = 2$, and that the associated tableaux are as in \eqref{firstLRtab} and 
\begin{equation*}
 \begin{ytableau}
 \none & \none & \none & \none & 1 \\
\none & \none & 1 & 2 \\
1 & 1 \\ 
 3 \\ 
\end{ytableau}.
\end{equation*}
 In contrast, the tableau 
\begin{equation*}
 \begin{ytableau}
 \none & \none & \none & \none & 1 \\
\none & \none & 1 & 1 \\
1 & 3 \\ 
 2 \\ 
\end{ytableau}
\end{equation*}
 is not an LR tableau, as the Yamanouchi condition does not hold. 
\end{example}

\subsection{Kronecker products/coefficients}
 Kronecker coefficients may be defined, as above, according to the decomposition in \eqref{SpechtSpecht}. The implementation of 
 the Kronecker product operation in the {\tt SageMath} system thus provides, via the relation in \eqref{sasts}, a practical way of 
 numerically computing specific instances of expressions of the form $g_{\lambda \, \mu \, \nu}$. 

\begin{example} 
 We may verify using {\tt SageMath} that 
\begin{multline*}
 s_{321} \ast s_{2211} = \\ 
 s_{21111} + 2 s_{2211} + s_{222} + 2 s_{3111} + 3 s_{321} + s_{33} + 2 s_{411} + 2 s_{42} + s_{51}. 
\end{multline*}
 From the above expansion together with \eqref{sasts}, we find that $ g_{ 321 \, 2211 \, 411} = 2$. 
\end{example} 

 Basic properties of the binary operation $\ast$ include the relations 
\begin{equation}\label{flatKron}
 s_{(n)} \ast s_{\lambda} = s_{\lambda} \ \ \ \text{and} \ \ \ s_{(1^n)} \ast s_{\lambda} = s_{\lambda^{\operatorname{t}}}
\end{equation}
 for the transpose $\lambda^{\operatorname{t}}$ of an integer partition $\lambda$, together with the equalities $s_{\lambda} \ast
 s_{\mu} = s_{\mu} \ast s_{\lambda} = s_{\lambda^{\operatorname{t}}} \ast s_{\mu^{\operatorname{t}}} 
 = s_{\mu^{\operatorname{t}}} \ast s_{\lambda^{\operatorname{t}}}$. Basic properties concerning Kronecker coefficients include the 
 $\mathfrak{S}_{3}$-symmetry of the indices of Kronecker coefficients (which may be seen from the expansion in \eqref{gtochar} in terms 
 of irreducible symmetric group characters), with the indices of each of the Kronecker coefficients among 
\begin{equation}\label{gsym}
 g_{\lambda \, \mu \, \nu} = g_{\lambda \, \nu \, \mu} = \cdots = g_{ \nu \, \mu \, \lambda }
\end{equation}
 being invariant under $\mathfrak{S}_{3}$-permutations. Further such properties include the conjugation-invariance relation such that 
 $ g_{\lambda \, \mu \, \nu} = g_{\lambda \, \mu^{\operatorname{t}} \, \nu^{\operatorname{t}}}$, together with the evaluations for 
 Kronecker coefficients associated with the sign and trivial characters, with 
\begin{equation}\label{gtrivialrep}
 g_{(k) \, \lambda \, \mu} = \text{{\bf 1}}_{\lambda = \mu} 
\end{equation} 
 and with 
\begin{equation}\label{gsignrep}
 g_{(1^{k}) \, \lambda \, \mu} = \text{{\bf 1}}_{\lambda = \mu^{\operatorname{t}}}. 
\end{equation} 
 A basic property relating the Hall inner product on $\textsf{Sym}$ and the $\ast$-operation on $\textsf{Sym}$ is such that 
\begin{equation}\label{HallKronecker}
 \langle f \ast g, h \rangle = \langle f, g \ast h \rangle, 
\end{equation} 
 and the relation in \eqref{HallKronecker} can be shown to follow from the self-duality relation in \eqref{Schurdual} together with 
 the $\mathfrak{S}_{3}$-symmetry property of the indices of Kronecker coefficients indicated in \eqref{gsym}. 

\subsection{Rosas's formula for two-rowed and hook shapes}\label{secRosas}
 A key to a number of our main results is given by a remarkable evaluation due to Rosas \cite{Rosas2001} for Kronecker coefficients of 
 the form $g_{t \, h \, \nu}$ for a two-rowed partition $t$ and a hook $h$. This evaluation needs to be adapted, as below, for 
 our purposes. 

 For an arbitrary integer partition $\eta$, define 
$$ \operatorname{tail}(\eta) = \begin{cases} 
 (\eta_3, \ldots, \eta_{\ell(\eta)}), 
 & \text{if $\ell(\eta) \geq 3$;} \\
 (), & \text{otherwise.} 
 \end{cases} $$ Again for an arbitrary partition $\eta$, define 
\begin{align*} 
 u(\eta) & = \text{$\#$ of 2's in $\operatorname{tail}(\eta)$}, \\ 
 v(\eta) & = \text{$\#$ of 1's in $\operatorname{tail}(\eta)$}, \\ 
 \text{{\bf n}}_{3}(\eta) & = \begin{cases} 
 0, & \text{if $\ell(\eta) = 1$;} \\
 \eta_2, & \text{if $\ell(\eta) \geq 2$ and $\eta_1 - \eta_2 \leq v(\eta)$;} \\
 u(\eta) + 2, & \text{if $\ell(\eta) \geq 2$ and $\eta_1 - \eta_2 > v(\eta)$,} 
 \end{cases} \\ 
 \text{{\bf n}}_{4}(\eta) & = \begin{cases} 
 \eta_{1}, & \text{if $\ell(\eta) = 1$;} \\
 \eta_1, & \text{if $\ell(\eta) \geq 2$ and $\eta_1 - \eta_2 \leq v(\eta)$;} \\
 u(\eta) + v(\eta) + 2, & \text{if $\ell(\eta) \geq 2$ and $\eta_1 - \eta_2 > v(\eta)$, } 
 \end{cases} \\ 
 \text{{\bf d}}_{1}(\eta) & = \begin{cases} 
 0, & \text{if $\ell(\eta) = 1$;} \\
 v(\eta), & \text{if $\ell(\eta) \geq 2$ and $\eta_1 - \eta_2 \leq v(\eta)$;} \\
 \eta_1 - \eta_2, & \text{if $\ell(\eta) \geq 2$ and $\eta_1 - \eta_2 > v(\eta)$, } 
 \end{cases} \\ 
 \text{{\bf d}}_{2}(\eta) & = \begin{cases} 
 0, & \text{if $\ell(\eta) = 1$;} \\
 u(\eta), & \text{if $\ell(\eta) \geq 2$ and $\eta_1 - \eta_2 \leq v(\eta)$;} \\
 \eta_2 - 2, & \text{if $\ell(\eta) \geq 2$ and $\eta_1 - \eta_2 > v(\eta)$, and } 
 \end{cases} \\ 
 \text{{\bf e}}^{[a]}_{\eta}(c) 
 & = \begin{cases} 
 c + 1, & \text{if $\ell(\eta) = 1$;} \\
 c + 1, & \text{if $\ell(\eta) \geq 2$ and $\eta_1 - \eta_2 \leq v(\eta)$;} \\
 a - 1, & \text{if $\ell(\eta) \geq 2$ and $\eta_1 - \eta_2 > v(\eta)$.} 
 \end{cases}
\end{align*}
 For integers $n_3$, $n_4$, $d_1$, $d_2$, $e$, and $r$, define
\begin{align*} 
 \Phi(n_3, n_4, d_1, d_2; e, r) 
 = & \text{{\bf 1}}_{n_3 \leq r - d_2 - 1 \leq n_4} \text{{\bf 1}}_{d_1 + 2 d_2 < e < d_1 + 2 d_2 +3} + \\
 & \text{{\bf 1}}_{n_3 \leq r - d_2 \leq n_4} \text{{\bf 1}}_{d_1 + 2 d_2 \leq e \leq d_1 + 2 d_2 +3} + \\ 
 & \text{{\bf 1}}_{n_3 \leq r - d_2 + 1 \leq n_4} \text{{\bf 1}}_{d_1 + 2 d_2 < e < d_1 + 2 d_2 +3} - \\ 
 & \text{{\bf 1}}_{n_3 + d_2 + d_1 = r} \text{{\bf 1}}_{d_1 + 2 d_2 + 1 \leq e \leq d_1 + 2 d_2 +2}.
\end{align*} 
 Also, we define $$ \Psi(c,r,t,N) = \text{{\bf 1}}_{r-1 \leq c+1 \leq N-r} \text{{\bf 1}}_{c+1=t} + \text{{\bf 1}}_{r \leq \frac{c+t+2}{2} \leq N - 
 r} \text{{\bf 1}}_{ |c + 1 -t| \leq 1} $$ for integer arguments. Define a \emph{double hook} as an integer partition in a set of the form 
 $$ \operatorname{DH}(N) := \big\{ \lambda \vdash N : \ell(\lambda) \leq 2 \, \text{or} \, \big( \ell(\lambda) \geq 3 \, \text{and} 
 \, \lambda_{3} \leq 2 \big) \big\}. $$ 

 Let $\eta$ be a partition, and let $a$ and $r$ and $c$ be integers. Define ${\Xi}$ so that $$ {\Xi}_{\eta}^{[a]}(0, c) := \text{{\bf 1}}_{\eta = 
 (a, 1^{c + 1})} $$ and, for $1 \leq r \leq \lfloor \frac{N}{2} \rfloor$, define
\begin{multline*} 
 {\Xi}_{\eta}^{[a]}(r, c) = \\
 \begin{cases} 
 \text{{\bf 1}}_{c=0} \text{{\bf 1}}_{r=1}, & \text{if $ \eta = (N)$;} \\
 \text{{\bf 1}}_{\eta = (N - r, r)^{\operatorname{t}}}, & \text{if $\eta = (1^N)$;} \\ 
 \Psi(c,r,t,N), & \text{if $\eta = (h, 1^{t}) \neq (N)$;} \\
 \null & \text{and $h \geq 2$ and $t \geq 1$;} \\ 
 \Phi\big( \text{{\bf n}}_{3}(\eta), \text{{\bf n}}_{4}(\eta), \text{{\bf d}}_{1}(\eta), 
 \text{{\bf d}}_{2}(\eta); 
 \text{{\bf e}}_{\eta}^{[a]}(c), r \big) & \text{if $\eta \in \operatorname{DH}(N)$ 
 and $\ell(\eta) \geq 2$;} \\ 
 \null & \text{and $\eta_2 \geq 2$;} \\
 0, & \text{otherwise.} 
 \end{cases}
\end{multline*}

 The functions $\mathbf n_3$, $\mathbf n_4$, $\mathbf d_1$, etc.\ given above were constructed by reorganizing the case-by-case 
 conditions due to Rosas \cite[Corollary 3 and Theorem 4]{Rosas2001} into a uniform system of inequalities, so that the 
 following result can be shown to be equivalent to Rosas's original statements on Kronecker coefficients equivalent to 
 the left-hand side of \eqref{Rosasequiv} below. 

\begin{theorem}
 (Rosas, 2001) Let $t = (N-r,r)$ with $1 \leq r \leq \lfloor \frac{N}{2} \rfloor$, let $h = (a, 1^{c+1})$ with $a \geq 1$ and $c \geq 0$, 
 and let $\nu \vdash N$. 
 Then the relation 
\begin{equation}\label{Rosasequiv}
 g_{(N-r,r) \, (a, 1^{c+1}) \, \nu } = {\Xi}_{\nu}^{[a]}(r,c) 
\end{equation}
 holds (cf.\ \cite[Corollary 3 and Theorem 4]{Rosas2001}).
\end{theorem}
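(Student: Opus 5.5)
The statement is a re-encoding of Rosas's results \cite[Corollary 3 and Theorem 4]{Rosas2001}, so the proof is a translation and verification, not a new derivation. I take Rosas's classification of $g_{(N-r,r)\,(N-e,1^{e})\,\nu}$ as given and check, case by case on the shape of $\nu$, that it agrees with ${\Xi}_{\nu}^{[a]}(r,c)$. Throughout, $e = c+1$ is the leg length of $h$ and $a = N-c-1$.

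First I dispose of the degenerate cases directly from the basic properties in Section~\ref{secprelim}. If $\nu=(N)$, then \eqref{gtrivialrep} gives $g = \mathbf{1}_{(N-r,r)=(a,1^{c+1})}$. Since $r\ge 1$, this holds only when $c=0$ and $r=1$, which is $\mathbf{1}_{c=0}\mathbf{1}_{r=1}$. If $\nu=(1^N)$, then \eqref{gsignrep} gives $\mathbf{1}_{(a,1^{c+1}) = (N-r,r)^{\operatorname{t}}}$, and I reconcile this with the indicator written in the definition of $\Xi$. For the $r=0$ convention, $t=(N)$ and \eqref{gtrivialrep} gives $\mathbf{1}_{\nu=(a,1^{c+1})}$ directly.

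Next I use the fact, from Rosas and also from Blasiak's rule, that $g_{t\,h\,\nu}=0$ unless $\nu$ is a double hook, i.e.\ $\nu\in\operatorname{DH}(N)$. This gives the ``otherwise'' value $0$.

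For hook $\nu=(h',1^{t'})$ with $h'\ge2$ and $t'\ge1$, I compare against Rosas's Theorem on two hooks and a two-row shape. Rosas's answer is nonzero precisely when the leg lengths $c+1$ and $t'$ differ by at most one. The multiplicity is then $1$ or $2$, governed by inequalities in $r$ of the form $r\le$ (something) $\le N-r$. I check that the two summands of $\Psi(c,r,t',N)$ reproduce exactly these conditions. The first summand covers equal legs together with the window for $r$. The second summand covers $|c+1-t'|\le1$ with the averaged-index window, which accounts for the multiplicity-two case when the legs are equal.

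For the genuine double hooks with $\eta_2\ge2$, I split according to whether $\eta_1-\eta_2\le v(\eta)$, which is Rosas's split between the ``arm'' and ``leg'' regimes. In each branch I substitute the values of $\mathbf{n}_3$, $\mathbf{n}_4$, $\mathbf{d}_1$, $\mathbf{d}_2$ and $\mathbf{e}^{[a]}_\eta(c)$ into $\Phi$. I then verify that the three positive indicator terms, which correspond to $r-d_2\in\{n_3-\cdot,\dots\}$ shifted by $-1,0,+1$, minus the correction term, reproduce Rosas's piecewise formula. That formula is a sum of interval indicators in $r$ and in the hook parameter, with a boundary overlap that must be subtracted. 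I do this by writing Rosas's conditions in terms of $(u,v,\eta_1,\eta_2)$ and matching endpoints.

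The main obstacle is this last bookkeeping: making sure the subtracted term $\mathbf{1}_{n_3+d_2+d_1=r}$ exactly cancels the double counting at the boundary where Rosas's cases meet. I expect to settle it by checking both regimes at the extreme values $r=n_3+d_1+d_2$ and $e\in\{d_1+2d_2+1,\,d_1+2d_2+2\}$. As a safeguard I would cross-check small $N$ against direct computation.
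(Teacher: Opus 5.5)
You have the right route: treat the statement as a re-encoding of Rosas's results and verify it branch by branch. That means settling $\nu=(N)$ and $\nu=(1^N)$ via \eqref{gtrivialrep} and \eqref{gsignrep}, and the remaining non-double-hooks by vanishing outside $\operatorname{DH}(N)$. This is also all the paper offers; it gives no argument beyond asserting the equivalence.

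\textbf{The gap.} The verification cannot be completed, because the statement as encoded is false at $r=1$. It fails at exactly two steps you treat as routine.

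(i) For $\nu=(1^N)$ you correctly obtain $\mathbf{1}_{(a,1^{c+1})=(N-r,r)^{\operatorname{t}}}=\mathbf{1}_{a=2}\mathbf{1}_{r=1}$. But the definition of $\Xi$ prescribes $\mathbf{1}_{\eta=(N-r,r)^{\operatorname{t}}}$ with $\eta=(1^N)$, which is $0$ for every $r\geq 1$. So your step ``I reconcile this with the indicator'' cannot be carried out: the two disagree whenever $h=(2,1^{N-2})$ and $r=1$.

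(ii) In the hook branch with equal legs $t=c+1$ and $r=1$, the windows in the two summands of $\Psi$ are $0\leq c+1\leq N-1$ and $1\leq c+1\leq N-2$. Both hold automatically, so $\Psi=2$. But $g_{(N-1,1)\,h\,h}$ equals the number of removable corners of $h$ minus $1$, which is $1$.

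The smallest case already shows both failures. We have $s_{(2,1)}\ast s_{(2,1)}=s_{(3)}+s_{(2,1)}+s_{(1^3)}$, whereas $\Xi^{[2]}_{\nu}(1,0)$ equals $1$, $2$, $0$ for $\nu=(3),(2,1),(1^3)$. Even the dimension count fails: $1\cdot 1+2\cdot 2+0\cdot 1=5\neq 4$. Your own safeguard of checking small $N$ would have caught this at $N=3$.

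\textbf{Why (ii) happens.} At $r=1$, the expansion $s_{(N-1,1)}=h_{N-1}h_1-h_N$ subtracts $\langle s_h\ast s_\nu,h_N\rangle=\mathbf{1}_{\nu=h}$. This is a boundary term not captured by the interval conditions that govern $r\geq 2$. For $r\geq2$ the subtracted term is again a sum over pairs of hooks $\alpha\vdash N-r+1$, $\beta\vdash r-1$, and there $\Psi$ does agree with a direct computation.

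\textbf{What a correct proof needs.} Along your lines, you must first amend the statement. Replace the $(1^N)$ entry by $\mathbf{1}_{(a,1^{c+1})=(N-r,r)^{\operatorname{t}}}$. Then subtract $\mathbf{1}_{r=1}\mathbf{1}_{c+1=t}$ from $\Psi$, or treat $r=1$ separately by the remove-a-box/add-a-box rule for $g_{(N-1,1)\,\mu\,\nu}$.

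Note also that your plan puts the difficulty in the wrong place. Spot checks of the $\Phi$ branch, which you flag as the main obstacle, agree with direct computation in both regimes. The errors sit in the degenerate and hook branches you pass over.
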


\subsection{Blasiak's combinatorial interpretation}\label{secBlasiakinter}
 The following preliminaries on Blasiak's combinatorial interpretation provide a streamlined version of this interpretation specific to our 
 paper, and one may compare this with the work of Liu \cite{Liu2017} on a reformulation of Blasiak's construction. 

 Let 
\begin{equation}\label{alphabet}
 \mathcal{A} = \{ 1, 2, \ldots \} \cup \{ \overline{1}, \overline{2}, \ldots \} 
\end{equation}
 be an alphabet, being consistent with Blasiak's notation \cite{Blasiak20162018}. We refer to the members of \eqref{alphabet} as 
 \emph{colored letters}. A \emph{colored word} refers to a word in $\mathcal{A}^{\ast}$, being consistent with Blasiak's terminology. 
 According to Blasiak, the \emph{natural order} on \eqref{alphabet} is such that 
\begin{equation}\label{linearorder}
 \overline{1} < 1 < \overline{2} < 2 \cdots. 
\end{equation} 
 
 For a colored word $w \in \mathcal{A}^{\ast}$, let $m = m(w)$ be the largest integer $z$ such that either $z$ or $\overline{z}$ appears in 
 $w$. The \emph{(unbarred) content} of $w$ refers to the tuple $(t_1, t_{2}, \ldots, t_{m})$ such that $t_i$ is equal to the number of 
 occurrences of $i$ plus the number of occurrences of $\overline{i}$ among the characters of $w$, for $i \in \{ 1, 2, \ldots, m \}$
 (cf.\ Section 2.1 in Blasiak's paper \cite{Blasiak20162018}). The \emph{total color} 
 $\operatorname{tc}(w)$ of a colored word $w$ refers to the number of barred letters in $w$. 

 For a word $w \in \mathcal{A}^{\ast}$, define $w^{\operatorname{blft}}$ as the word obtained by moving any barred letters in $w$ 
 to the left, while preserving their relative order, and then removing any bars from the resulting word, thus producing a word 
 in $\{ 1, 2, \ldots \}^{\ast}$. As later clarified, the following \emph{original} definition of a colored tableau from Blasiak 
 \cite{Blasiak20162018} is not equivalent to the definition of the same term in Liu's reformulation of Blasiak's 
 combinatorial interpretation \cite{Liu2017}. 

\begin{definition}\label{coloredBlasiak}
 A \emph{(semistandard) colored tableau} \cite{Blasiak20162018} (cf.\ Section 2.2 in Liu's reformulation \cite{Liu2017}) is a partition 
 tableau labeled with members of $\mathcal{A}$ in such a way so that: 

\begin{enumerate}

\item Unbarred letters weakly increase (from west to east) in each row;

\item unbarred letters strictly increase (from north to south) in each column;

\item barred letters strictly increase in each row; and

\item barred letters weakly increase in each column.

\end{enumerate}
\end{definition}

\begin{example}
 The tableau
\begin{equation}\label{excoloredBlasiak}
 \begin{ytableau}
 \overline{1} & 1 & \overline{2} \\
 \overline{1} & \overline{2} & 2 \\
 \overline{2} & 2 & 3 
\end{ytableau}
\end{equation}
 given by Blasiak \cite{Blasiak20162018} satisfies the conditions in Definition \ref{coloredBlasiak}. 
\end{example}

\begin{remark}\label{notcountercolored}
 Observe that the orderings for unbarred and barred characters in Definition \ref{coloredBlasiak} are treated \emph{separately}, despite 
 the use of the expression ``semistandard,'' which one might think should refer to a semistandard labeling/ordering imposed 
 globally/consistently for \emph{both} barred and unbarred characters, with respect to $<$. So, one might wonder whether or not 
 a tableau such as 
\begin{equation}\label{violatingtableau}
 \begin{ytableau} 1 & \overline{1} \end{ytableau}, 
\end{equation}
 which would seem to violate the imposed relation $\overline{1} < 1$ from \eqref{linearorder}, would be permissible for our purposes. 
 The tableau in \eqref{violatingtableau} \emph{is} a (semistandard) colored tableau according to Blasiak's definition reproduced above 
 in Definition \ref{coloredBlasiak}, but, as we later consider, the specific tableau in \eqref{violatingtableau} does not satisfy further 
 conditions required in Blasiak's combinatorial interpretation. In Liu's reformulation \cite{Liu2017} of Blasiak's combinatorial 
 interpretation (see Section 2.2), the same term ``(semistandard) colored tableau'' \emph{is} defined so that the rows and columns 
 \emph{are} globally/consistently weakly increasing with respect to a linear order such as \eqref{linearorder}. 
\end{remark}

 We require the usual row/Schensted insertion procedure associated with the RSK bijection, as reviewed below. The following 
 insertion procedure is the usual \emph{semistandard row insertion} procedure, which one may compare against Sagan's definition
 of insertion for \emph{partial tableaux} with distinct entries \cite[pp.\ 92--93]{Sagan2001}, and referring to the appropriate text by 
 Fulton for further preliminaries and background material concerning the below procedure \cite{Fulton1997}. 

\begin{definition}\label{defineSchensted}
 Let $T$ be an SSYT of partition shape. What we refer to as \emph{ordinary row insertion} or \emph{Schensted insertion} refers to the 
 following insertion procedure. Insert $x$ into the initial row ($i=1$) for a positive integer $x$. Then: 

\begin{enumerate}

\item Determine the leftmost entry/label $y$ in the $i^{\text{th}}$ row such that $y > x$; and 

\item Replace $y$ by $x$, and then insert $y$ into row $i+1$ using the same process. 

\end{enumerate}

\noindent If no such label $y$ exists in a given row, we append $x$ to the end of this row, and the process is terminated. We let the 
 resultant tableau be denoted as $ T\leftarrow x$. 
\end{definition}

\begin{example}
 Set $$ T = \begin{ytableau} 1 & 2 & 4 \\ 2 & 3 \\ 4 \end{ytableau} $$ and let $x = 3$, according to the notation in Definition 
 \ref{defineSchensted}. In the initial row, the leftmost label strictly greater than $x$ is $4$. We replace this greater label with $x$, and 
 bump $4$ into the next lower row, producing $$\begin{ytableau} 1 & 2 & 3 \\ 
 2&3\\ 4 \end{ytableau}, $$ with the understanding that $4$ is to be Schensted-inserted into the second row. Since there is no label in 
 this second row strictly greater than $4$, we position $4$ at the end of this row, yielding 
\begin{equation*}
 T\leftarrow x = \begin{ytableau} 1&2&3\\ 2&3&4\\ 4 \end{ytableau}. 
\end{equation*}
\end{example}

 The analogue of Definition \ref{defineSchensted} referred to as \emph{column insertion} may be defined by taking the transpose of a 
 given tableau, applying Definition \ref{defineSchensted}, 
 and then transposing the resultant tableau. 
 The notion of \emph{mixed insertion} is key to Blasiak's construction 
 \cite{Blasiak20162018} and traces back to the work of Haiman in 1989 \cite{Haiman1989}.

\begin{definition}\label{mixedinsertion} 
 (Mixed insertion) \, Let $w=w_1\cdots w_n$ be a colored word and let $T_0$ be a colored tableau. We construct tableaux $T_0, 
 T_1, \dots, T_n$ recursively, in the following manner. 

 If $w_i$ is unbarred, then we insert this character into the initial row of $T_{i-1}$. If $w_i$ is barred, then we insert it into the initial 
 column of $T_{i-1}$. Whenever a character $\alpha$ is bumped, we insert $\alpha$ into the row immediately underneath if $\alpha$ is 
 unbarred, and into the column immediately to the right if $\alpha$ is barred. We then continue until no new letter is bumped. The 
 resultant tableau $T_n$ is called the \emph{mixed insertion tableau} of $w$ This may be denoted as $T_{0}
 \xleftarrow{\operatorname{m}} w$ \cite[Definition 2.4]{Blasiak20162018} (cf.\ \cite{Haiman1989}). 
\end{definition}

\begin{example}
 Let $T$ denote the colored tableau shown in \eqref{excoloredBlasiak}. Inserting $\overline{1}$ into this tableau, we obtain 
 $$\begin{ytableau} \overline{1} & 1 & \overline{2} \\ \overline{1} & \overline{2} & 2 \\ \overline{1} & \overline{2} & 3 \\ 2 \end{ytableau}.$$

 Inserting $1$ into $T$ according to Definition \ref{mixedinsertion}, we obtain 
$$\begin{ytableau} \overline{1} & 1 & 1 & \overline{2} \\ \overline{1} & \overline{2} & 2 \\ \overline{2} & 2 & 3 \end{ytableau}.$$

 Inserting $\overline{2}$ 
 into $T$ according to Definition \ref{mixedinsertion}, we obtain 
 $$\begin{ytableau} \overline{1} & 1 & \overline{2} \\ \overline{1} & \overline{2} & 2 \\ \overline{2} & 2 & 3 \\ \overline{2} \end{ytableau}.$$

 Inserting $2$ into $T$ according to Definition \ref{mixedinsertion}, we obtain 
 $$ \begin{ytableau} \overline{1} & 1 & \overline{2} & 2 \\ \overline{1} & \overline{2} & 2 \\ \overline{2} & 2 & 3 \end{ytableau}. $$
\end{example}

\begin{definition}
 Let $w = w_{1} w_{2} \cdots w_{n}$ be a colored word. We then form a sequence $P_{0}$, $P_{1}$, $\ldots$, $P_{n}$ of tableaux recursively, 
 so that $P_{0}$ is the empty tableau, and so that 
\begin{equation}\label{displayPk}
 P_{k} = P_{k-1} \xleftarrow{\operatorname{m}} w_k,
\end{equation}
 where the right-hand side of \eqref{displayPk} is used to indicate the mixed insertion of $w_{k}$ into $P_{k-1}$ according to Definition 
 \ref{mixedinsertion}. We then write $P(w) = P_{n}$ and refer to this final tableau as the \emph{mixed insertion tableau} 
 of $w$ \cite[Definition 2.4]{Blasiak20162018}. 
\end{definition}

\begin{example}
 For the colored word $w = \overline{2} \, 1 \, \overline{4} \, 4 \, \overline{4} \, 3 \, \overline{1} \, 3$, we find that 
 $$ P(\overline{2}1\overline{4}4\overline{4}3\overline{1}3)= \begin{ytableau} \overline{1} & \overline{2} & 3 & 3 \\ 1 & \overline{4} \\ 
 \overline{4} & 4 \end{ytableau}. $$ In this case, we find that $w^{\operatorname{blft}} = 24411433$. 
\end{example}

\begin{definition}
 A colored word $y$ is Yamanouchi if, for every suffix $y_{k+1} y_{k+2}\cdots y_n$, its content forms a partition (see Section 3.1 of 
 Blasiak's work \cite{Blasiak20162018}). 
\end{definition}

 Following Blasiak \cite{Blasiak20162018} (comparing the following streamlined definition against the material in Section 3.1 in Blasiak's 
 paper, and against Liu's simplification \cite{Liu2017} of Blasiak's construction), \emph{colored Yamanouchi tableaux} may be defined
 as members in sets of the form 
\begin{equation}\label{CYTd}
 \operatorname{CYT}_{\lambda, d} = \left\{ T = P(w) \, \Bigg| \, \begin{matrix}
 \text{$w$ is a colored word of content $\lambda$;} \\
 \text{$\operatorname{tc}(w) = d$; and} \\
 \text{$w^{\operatorname{blft}}$ is Yamanouchi.}
 \end{matrix} \right\}. 
\end{equation}
 This is equivalent to the formulation used in Blasiak's hook Kronecker rule \cite[Theorem 3.5]{Blasiak20162018}. 

 For a tableau $T$, we write $T_{\operatorname{SW}}$ in place of its most southwest entry. We impose the condition that the 
 southwest entry is unbarred, as in Blasiak's combinatorial interpretation \cite{Blasiak20162018}. This leads us to the remarkable 
 result due to Blasiak (cf.\ \cite[Theorem 3.5]{Blasiak20162018}) whereby
\begin{equation}\label{Blasiakinterpret}
 g_{\lambda \, (n-d, 1^d) \, \nu} = \#\{ T \in \operatorname{CYT}_{\lambda, d} \, | \, \operatorname{shape}(T) = \nu, \, 
 \text{$T_{\operatorname{SW}}$ is unbarred} \}.
\end{equation}
 For the purposes of this paper, we refer to the tableaux in the set on the right of \eqref{Blasiakinterpret} as \emph{Blasiak tableaux}. 
 More specifically, for the parameters/variables presented in \eqref{Blasiakinterpret}, we refer to the tableaux in the set on the right 
 of \eqref{Blasiakinterpret} as \emph{Blasiak tableaux parameterized by $\lambda$, $(n-d, 1^{d})$, and $\nu$}. 

\begin{example}\label{exglobal}
 After inputting
{\footnotesize
\begin{verbatim}
Sym = SymmetricFunctions(QQ);
s = Sym.schur();
print((s[5,2,1].kronecker_product(s[4,1,1,1,1])).scalar(s[4,2,1,1]));
\end{verbatim}}
\noindent into {\tt SageMath}, we find that 
\begin{equation}\label{geq5}
 g_{521 \, 41111 \, 4211} = 5.
\end{equation} 
 The five Blasiak tableaux corresponding to the evaluation in \eqref{geq5} are as below. $$T_1= \begin{ytableau} \bar1 & 1 & 1 & 1\\ 
 \bar1 & \bar3\\ \bar2\\ 2 \end{ytableau} \ \ \ \ \ \ 
 T_2= \begin{ytableau} \bar1 & 1 & 1 & \bar2\\ \bar1 & 2\\ \bar1\\ 3 \end{ytableau}$$

$$ T_3 = \begin{ytableau} \bar1 & 1 & 1 & \bar2\\ \bar1 & \bar3\\ 1\\ 2 \end{ytableau} \ \ \ \ \ \ 
 T_4 = \begin{ytableau} \bar1 & 1 & 1 & \bar3\\ \bar1 & \bar2\\ 1\\ 2 \end{ytableau}$$

$$ T_5= \begin{ytableau} \bar1 & 1 & 1 & \bar3\\ \bar1 & 2\\ \bar1\\ 2 \end{ytableau} $$

\noindent Choices of colored words corresponding to the above tableaux according to \eqref{CYTd} include 
\begin{align}
 w^{(1)} & = \overline{1} \, 1 \, 1 \, 2 \, 1 \, \overline{1} \, \overline{3} \, \overline{2}, \nonumber \\ 
 w^{(2)} & = \overline{1} \, \overline{2} \, \overline{1} \, 1 \, 3 \, 2 \, 1 \, \overline{1}, \nonumber \\ 
 w^{(3)} & = \overline{2} \, 1 \, 1 \, 2 \, 1 \, \overline{1} \, \overline{3} \, \overline{1}, \label{wAppendix} \\ 
 w^{(4)} & = \overline{3} \, 2 \, 1 \, 1 \, 1 \, \overline{1} \, \overline{2} \, \overline{1}, 
 \ \text{and} \nonumber \\ 
 w^{(5)} & = 2 \, \overline{1} \, \overline{1} \, \overline{3} \, \overline{1} \, 1 \, 2 \, 1. 
\end{align}
 For example, the successive insertion tableaux corresponding to the colored word $w^{(3)}$ in \eqref{wAppendix} include 
\begin{equation}\label{suggestinduct}
 \bar2 
 \rightsquigarrow \begin{ytableau}\bar2\end{ytableau} \ \ \ \bar2\,1 \rightsquigarrow \begin{ytableau}1 & \bar2\end{ytableau}, \ \ 
 \ \text{and} \ \ \ \bar2\,1\,1 \rightsquigarrow \begin{ytableau}1 & 1 & \bar2\end{ytableau}, 
\end{equation}
 and further such illustrations are given in the Appendix, as in Section \ref{AppendInsert} below. 
\end{example}

\begin{remark} 
 Although Blasiak’s original definition imposes semistandard conditions \emph{separately} on barred and unbarred entries, Blasiak 
 tableaux (as defined above) are, in fact, weakly increasing along the rows and columns, i.e., globally with respect to both barred and 
 unbarred characters and with respect to the total order in \eqref{linearorder} (see Example \ref{exglobal}). This can be shown inductively 
 (as may be suggested in \eqref{suggestinduct}) via the repeated applications of mixed insertion (and with the use of the 
 Yamanouchi condition). So, the tableau in \eqref{violatingtableau} (within Remark \ref{notcountercolored}) is not a Blasiak tableau (as 
 may be verified), despite it being a so-called ``semistandard'' colored tableau (according to Blasiak's definition, but not Liu's definition), 
 and this, in turn, is despite it having a row that is not weakly increasing with respect to $\leq$.
\end{remark}

\section{Near-hooks}\label{secNear}
 A \emph{near-hook}, for the purposes of this paper, refers to an integer partition of the form $(a, b, 1^{c})$ for $a \geq b \geq 2$. The 
 following Theorem may be seen as providing the basis for our subsequent results. 
 
\begin{theorem}\label{fundamentaltheorem}
 Let $\nu \vdash n$ and $\lambda \vdash n$, and let $a$, $b$, and $c$ be such that $a \geq b \geq 2$ and $a + b + c = n$. Then 
 $$ g_{\lambda \, (a, b, 1^{c}) \, \nu} = \sum_{\substack{ \eta \vdash n - (b-1) \\ \delta \vdash b-1 \\ \theta \vdash n -(b-1) }} c_{\eta \, 
 \delta}^{\nu} g_{\theta \, (a, 1^{c+1}) \, \eta } c^{\lambda}_{\theta \, \delta} - \sum_{\substack{ \eta \vdash a \\ \delta \vdash n-a \\ 
 \theta \vdash n-a }} c_{\eta \, \delta}^{\nu} g_{\theta \, (b-1, 1^{c+1}) \, \delta } c^{\lambda}_{\eta \, \theta}. $$
\end{theorem}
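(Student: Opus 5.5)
The plan is to combine the near-hook Giambelli expansion \eqref{GLgives} with Littlewood's rule \eqref{mainLittlewood}, and then extract coefficients using the Hall inner product. First I would check \eqref{GLgives} directly from \eqref{origGiambelli}. Write $(a,b,1^c)$ in Frobenius notation. Since $a\ge b\ge 2$, there are exactly $d=2$ diagonal boxes, with $\alpha_1=a-1$, $\alpha_2=b-2$, $\beta_1=c+1$ and $\beta_2=0$. Expanding the $2\times 2$ determinant via \eqref{rewriteGiambelli} gives
\begin{equation*}
s_{(a,b,1^c)} = s_{(a,1^{c+1})}\, s_{(b-1)} - s_{(a)}\, s_{(b-1,1^{c+1})}.
\end{equation*}
Both products here are homogeneous of degree $n$. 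The hypothesis $b\ge 2$ is used exactly here, to guarantee the second diagonal box and to make $(b-1)$ a genuine nonempty one-row partition.

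Next, by \eqref{sasts}, \eqref{gsym} and orthonormality \eqref{Schurdual}, we have $g_{\lambda\,(a,b,1^c)\,\nu} = \langle s_{(a,b,1^c)} \ast s_\nu, s_\lambda\rangle$. By bilinearity of $\ast$ this splits into two terms, one for each product above, and I would apply \eqref{mainLittlewood} to each.

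For the first term I take the two factors to be $s_{(a,1^{c+1})}$ of degree $n-(b-1)$ and $s_{(b-1)}$ of degree $b-1$. Then
\begin{equation*}
\big(s_{(a,1^{c+1})} s_{(b-1)}\big)\ast s_\nu = \sum_{\eta\vdash n-(b-1),\ \delta\vdash b-1} c^{\nu}_{\eta\,\delta}\, \big(s_\eta \ast s_{(a,1^{c+1})}\big)\big(s_\delta \ast s_{(b-1)}\big).
\end{equation*}
By \eqref{flatKron}, $s_\delta\ast s_{(b-1)}=s_\delta$. Expanding $s_\eta\ast s_{(a,1^{c+1})}=\sum_\theta g_{\theta\,(a,1^{c+1})\,\eta}\, s_\theta$ via \eqref{sasts} and \eqref{gsym}, and pairing the product $s_\theta s_\delta$ with $s_\lambda$ using \eqref{ssprod}, produces $c^\lambda_{\theta\,\delta}$. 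This yields exactly the first sum in the statement, with $\theta\vdash n-(b-1)$.

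The second term is handled symmetrically, taking the factors to be $s_{(a)}$ of degree $a$ and $s_{(b-1,1^{c+1})}$ of degree $n-a$. Littlewood's rule gives $\sum_{\eta\vdash a,\ \delta\vdash n-a} c^\nu_{\eta\,\delta}\,(s_\eta\ast s_{(a)})(s_\delta\ast s_{(b-1,1^{c+1})})$. Here $s_\eta\ast s_{(a)}=s_\eta$, and the second factor expands as $\sum_{\theta\vdash n-a} g_{\theta\,(b-1,1^{c+1})\,\delta}\, s_\theta$. Pairing $s_\eta s_\theta$ with $s_\lambda$ gives $c^\lambda_{\eta\,\theta}$. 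With the minus sign from the Giambelli expansion, this is the second sum in the statement.

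None of these steps is deep. The main point requiring care is the bookkeeping in applying \eqref{mainLittlewood}: the summation partitions $\tau,\eta$ in Littlewood's rule must have the same sizes as the respective factors, and this is what fixes the ranges $\eta\vdash n-(b-1)$, $\delta\vdash b-1$ in the first sum and $\eta\vdash a$, $\delta\vdash n-a$ in the second. One must also use the $\mathfrak S_3$-symmetry \eqref{gsym} to put the indices of the hook Kronecker coefficients in the order displayed. The remaining care is in the Frobenius-coordinate check of \eqref{GLgives}, especially the boundary case $b=2$, where $\alpha_2=0$ and the second row of the Giambelli matrix involves the one-row partitions $(b-1)$ and $(b-1,1^{c+1})$, which one should check carefully.
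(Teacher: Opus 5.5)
Your proposal is correct and follows the paper's proof essentially step for step: expand $s_{(a,b,1^c)}$ via \eqref{GLgives}, apply Littlewood's rule \eqref{mainLittlewood} to each product, use \eqref{flatKron} to absorb the one-row factor, and extract the coefficient of $s_\lambda$ with the Hall inner product. Your explicit Frobenius-coordinate check of \eqref{GLgives}, with $\alpha=(a-1,b-2)$ and $\beta=(c+1,0)$, is a small addition that the paper leaves implicit (one harmless slip: for $b=2$ the partition $(b-1,1^{c+1})=(1^{c+2})$ is a single column, not a single row).
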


\begin{proof}
 From the consequence of the Giambelli--Leibniz expansion shown in \eqref{GLgives}, we write 
\begin{equation}\label{rightast}
 s_{(a, b, 1^c)} \ast s_{\nu} = \big( s_{(a, 1^{c+1})} s_{(b-1)} \big) \ast s_{\nu} - \big( s_{(a)} s_{(b-1, 1^{c+1})} \big) \ast s_{\nu}. 
\end{equation}
 In the formulation of the Littlewood identity in \eqref{mainLittlewood}, we set $\lambda = (a, 1^{c+1})$ and $\mu = (b-1)$, yielding 
\begin{equation}\label{setLittlewood}
 \left( s_{(a, 1^{c+1})} s_{(b-1)} \right) \ast s_{\nu} = \sum_{\substack{ \eta \vdash n - (b-1) \\ \delta \vdash b - 1 }} c_{\eta \, \delta}^{\nu} 
 \left( s_{(a, 1^{c+1})} \ast s_{\eta} \right) \left( s_{(b-1)} \ast s_{\delta} \right), 
\end{equation}
 and we find that \eqref{setLittlewood} simplifies to 
\begin{equation}\label{Littlewoodsimplify1}
 \left( s_{(a, 1^{c+1})} s_{(b-1)} \right) \ast s_{\nu} = \sum_{\substack{ \eta \vdash n - (b-1) \\ \delta \vdash b - 1 }} c_{\eta \, \delta}^{\nu} 
 \left( s_{(a, 1^{c+1})} \ast s_{\eta} \right) s_{\delta} 
\end{equation}
 according to \eqref{flatKron}. Now, in Littlewood's identity, we set $\lambda = (a)$ and $\mu = (b-1, 1^{c+1})$, producing the relation 
\begin{equation}\label{Littlewoodfund2}
 \left( s_{(a)} s_{(b-1, 1^{c+1})} \right) \ast s_{\nu} = \sum_{\substack{ \eta \vdash a \\ \delta \vdash n - a }} c_{\eta \, \delta}^{\nu} \left( 
 s_{(a)} \ast s_{\eta} \right) \left( s_{(b-1, 1^{c+1})} \ast s_{\delta} \right), 
\end{equation}
 and we see that \eqref{Littlewoodfund2} simplifies to 
\begin{equation}\label{Littlewoodsimplify2}
 \left( s_{(a)} s_{(b-1, 1^{c+1})} \right) \ast s_{\nu} = \sum_{\substack{ \eta \vdash a \\ \delta \vdash n - a }} c_{\eta \, \delta}^{\nu}
 s_{\eta} \left( s_{(b-1, 1^{c+1})} \ast s_{\delta} \right), 
\end{equation}
 again according to the rule for Kronecker products associated with trivial characters. From the relation in \eqref{sasts}, the coefficient of 
 $s_{\lambda}$ on the left-hand side of \eqref{rightast} is $g_{ \lambda \, (a, b, 1^c) \, \nu }$. Applying $\langle s_{\lambda}, \cdot 
 \rangle$ to both sides of each of \eqref{Littlewoodsimplify1} and \eqref{Littlewoodsimplify2}, the desired result then follows from the 
 Littlewood--Richardson rule together with the expansion rule for Kronecker products of Schur functions in terms of the Schur basis. 
 \end{proof}

\begin{example}
 In Theorem \ref{fundamentaltheorem}, we set $n = 6$, $a = 4$, $b = 2$, and $c = 0$, and we set $\lambda = \nu = 42$. This yields
 the expansion of $g_{42 \, 42 \, 42} = 2$ as 

 \ 

\noindent $ c_{41 \, 1}^{42} g_{41 \, 41 \, 41} c_{41 \, 1}^{42} + c_{41 \, 1}^{42} g_{32 \, 41 \, 41} c_{32 \, 1}^{42} + c_{32 \, 1}^{42}
 g_{41 \, 41 \, 32} c_{41 \, 1}^{42} + c_{32 \, 1}^{42} g_{32 \, 41 \, 32} c_{32 \, 1}^{42} - $

\noindent $ c_{4 \, 2}^{42} g_{2 \, 11 \, 2} c_{4 \, 2}^{42} - c_{31 \, 2}^{42} g_{2 \, 11 \, 2} c_{31 \, 2}^{42} - 
 c_{31 \, 2}^{42} g_{11 \, 11 \, 2} c_{31 \, 11}^{42} - c_{31 \, 11}^{42} g_{2 \, 11 \, 11} c_{31 \, 2}^{42} - $ 
 
\noindent $ c_{31 \, 11}^{42} g_{11 \, 11 \, 11} c_{31 \, 11}^{42} - c_{22 \, 2}^{42} g_{2 \, 11 \, 2} c_{22 \, 2}^{42}$. 

 \ 

\noindent Numerically evaluating the above LR/Kronecker coefficients, in order, we obtain 

 \ 

\noindent $ g_{42 \, 42 \, 42} = 1 \cdot 1 \cdot 1 + 1 \cdot 1 \cdot 1 + 1 \cdot 1 \cdot 1 + 1 \cdot 1 \cdot 1 - 1 \cdot 0 \cdot 1 - 
 1 \cdot 0 \cdot 1 - 
 1 \cdot 1 \cdot 1 - 1 \cdot 1 \cdot 1 - 1 \cdot 0 \cdot 1 - 1 \cdot 0 \cdot 1$. 
\end{example}

 In contrast to the trivial case in \eqref{gtrivialrep} given by a Kronecker coefficient in which at least one of the indexing partitions has only 
 one part, for the case whereby there is even at least one index that consists of two parts, this case is \emph{far} more difficult, to the 
 extent that this can even be seen as the starting point for the whole Kronecker problem. This motivates our investigations below. 

 Informally, the key to ``collapsing'' or reducing sums as in Theorem \ref{fundamentaltheorem} reduces to using the Pieri rule (as 
 formulated in terms of LR tableaux) in conjunction with the sign representation identity for Kronecker coefficients in \eqref{gsignrep}. 

\subsection{Two-rowed shapes paired with near-hooks}\label{twonearhook}
 Global hardness results on the computation of Kronecker coefficients \cite{BurgisserIkenmeyer2008,IkenmeyerMulmuleyWalter2017} 
 suggest that it may not be plausible that there is any meaningful, uniform, LR-type rule for Kronecker coefficients in full generality, and 
 hence the importance of identifying well behaved and well structured subclasses of partition shapes where exact evaluations and 
 combinatorial interpretations are permissible. This motivates our new results and techniques concerning the case of Theorem 
 \ref{fundamentaltheorem} given by $$ g_{(d, e) \, (a, b, 1^{c}) \, \nu} = \sum_{\substack{ \eta \vdash n - (b-1) \\ \delta \vdash b - 
 1 \\ \theta \vdash n -(b-1) }} c_{\eta \, \delta}^{\nu} g_{\theta \, (a, 1^{c+1}) \, \eta } c^{(d, e)}_{\theta \, \delta} - \sum_{\substack{ \eta 
 \vdash a \\ \delta \vdash n-a \\ \theta \vdash n-a }} c_{\eta \, \delta}^{\nu} g_{\theta \, (b-1, 1^{c+1}), \delta } c^{(d, e)}_{\eta \, 
 \theta}, $$ in view of how this relates to the cases and references listed in the survey in Section \ref{secSurvey}. 

 A key to our simplification of the above sums is given by how the LR coefficients $c^{(d, e)}_{\theta \, \delta}$ and $c^{(d, e)}_{\eta \, 
 \theta}$ vanish apart from the cases whereby $\theta$, $\delta$, $\eta$, and $\theta$ are subpartitions of $(d, e)$, and this leads us 
 toward Lemma \ref{LemLRtwo} below. 
 
\begin{lemma}\label{LemLRtwo}
 Let $x \geq y$, $u \geq v$, and $d \geq e$, and let $d+e = x + y + u + v$. Then 
\begin{equation}\label{displayLRtwo}
 c^{(d, e)}_{(x, y) \, (u, v)} = \text{{\bf 1}}_{\max(x+v, y+u) \leq d \leq x + u}. 
\end{equation}
 Equivalently, we have that $ c^{(d, e)}_{(x, y) \, (u, v)} \leq 1$ with equality if and only if there exists an integer $k$ satisfying both $$ 0 
 \leq k \leq \min(x - y, u - v) $$ and $$ (d, e) = (x + u - k, y + v + k). $$
\end{lemma}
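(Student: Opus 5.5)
We will prove \eqref{displayLRtwo} directly from the combinatorial interpretation of $c^{\lambda}_{\mu \, \nu}$ as the number of LR tableaux of shape $\lambda/\mu$ and weight $\nu$, with $\lambda = (d,e)$, $\mu = (x,y)$, $\nu = (u,v)$. The plan is to show that such a tableau, if it exists, is forced entry by entry. The existence conditions then collapse to the three inequalities $x+v \leq d$, $y+u \leq d$, $d \leq x+u$. We will also note where the containment $(x,y) \subseteq (d,e)$ comes from; if it fails, the coefficient vanishes.

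\textbf{Step 1 (row one).} The skew shape $(d,e)/(x,y)$ has $d-x$ cells in row one, in columns $x+1,\ldots,d$, and $e-y$ cells in row two, in columns $y+1,\ldots,e$. We argue that every entry of row one equals $1$. The reading word begins with the rightmost entry of row one, and the Yamanouchi condition forces this first letter to be $1$. Since rows weakly increase, all of row one is then $1$'s. Because only the labels $1,2$ occur (the weight has two parts), this requires $d - x \leq u$, i.e.\ $d \leq x+u$.

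\textbf{Step 2 (row two).} Row two contains the remaining $u-(d-x)$ ones followed by $v$ twos. A cell of row two in a column $j > x$ lies directly under a skew cell of row one containing $1$, so by column strictness it must contain $2$. The ones in row two therefore occupy columns $\leq x$, and there is room for all of them iff the number of row-two cells in columns $>x$ is at most $v$. That condition is $e - x \leq v$, which by $d+e = x+y+u+v$ is equivalent to $d \geq y+u$.

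\textbf{Step 3 (Yamanouchi).} The reading word is $1^{d-x}$, then the $v$ twos of row two read right to left, then the remaining ones. It is Yamanouchi iff $v \leq d-x$, i.e.\ $d \geq x+v$.

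\textbf{Step 4 (containment and uniqueness).} The inequalities above imply the containment: $d \geq x+v \geq x$, and $e - y = x+u+v-d \geq v \geq 0$. Conversely, when the three inequalities hold, the filling described in Steps 1--3 is an SSYT and is Yamanouchi. Since that filling is completely determined, $c^{(d,e)}_{(x,y)\,(u,v)} = \text{{\bf 1}}_{\max(x+v,\,y+u) \leq d \leq x+u}$.

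\textbf{Step 5 (reformulation).} For the equivalent form, set $k = x+u-d$, so that $(d,e) = (x+u-k,\, y+v+k)$. Then $d \leq x+u \iff k \geq 0$, $d \geq y+u \iff k \leq x-y$, and $d \geq x+v \iff k \leq u-v$.

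The main obstacle is Step 2: one must correctly use column strictness against the first row of the skew shape rather than against $\mu$ itself. It is also convenient to translate each condition on $e$ into a condition on $d$ via $d+e=x+y+u+v$. The remaining steps are routine bookkeeping.
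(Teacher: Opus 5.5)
Your proof is correct, and it takes a genuinely different route from the paper's.

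The paper argues algebraically. It writes $s_{(x,y)} = h_x s_{(y)} - h_{x+1} s_{(y-1)}$ by Jacobi--Trudi and applies the Pieri rule twice to $s_{(x,y)}s_{(u,v)}$. Along the way it discards every term indexed by a partition of length at least $3$, which is legitimate because multiplying by $h_m$ never shortens a partition. It then asserts that a telescoping cancellation leaves exactly $\sum_{k=0}^{\min(x-y,u-v)} s_{(x+u-k,\,y+v+k)}$ among the two-row terms, and reads off \eqref{displayLRtwo} from this. So the paper reaches the $k$-parametrized form first and the interval form second; you go the other way.

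Your direct analysis of LR tableaux of shape $(d,e)/(x,y)$ and weight $(u,v)$ makes the forced structure explicit:
\begin{itemize}
\item the first letter of the reading word forces row one to be all $1$'s;
\item column strictness forces the row-two cells in columns $>x$ to be $2$'s;
\item the lattice condition reduces to the prefix $1^{d-x}2^{v}$.
\end{itemize}
Multiplicity-freeness is therefore immediate. Each of $d \le x+u$, $d \ge y+u$, $d \ge x+v$ gets a concrete combinatorial meaning, and you handle the containment $(x,y)\subseteq(d,e)$ explicitly. The tableau you construct is exactly the one shown in \eqref{tab65} for $c^{65}_{52\,31}$.

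The paper's route gives the whole two-row part of the product in one stroke, and it matches the Jacobi--Trudi/Pieri technique reused in Lemma \ref{LrtoNN}. However, its telescoping step is only asserted, not carried out. Your argument is the more fully justified of the two.
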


\begin{proof} 
 By the Jacobi--Trudi rule, we have that $$ s_{(x, y)} = h_{x} s_{(y)} - h_{x+1} s_{(y-1)}, $$ so that 
\begin{equation}\label{stwostwo}
 s_{(x, y)} s_{(u, v)} = h_{x} \big( s_{(y)} s_{(u, v)} \big) - h_{x+1} \big( s_{(y-1)} s_{(u, v)} \big). 
\end{equation}
 The Pieri rule then gives us that 
\begin{equation}\label{Pieristwo}
 h_{m} s_{(u, v)} = \sum_{t=0}^{\min(m, u-v)} s_{(u+m-t, v+t)} + \sum_{\ell(\mu) \geq 3} a_{\mu} s_{\mu} 
\end{equation}
 for certain coefficients $a_{\mu}$. By applying \eqref{Pieristwo} to the right-hand terms in \eqref{stwostwo}, and by then disregarding any 
 terms associated with $h_x s_\mu$ or $h_{x+1} s_{\mu}$ for $\ell(\mu) \geq 3$ (since we are interested in an LR coefficient with a 
 two-row outer shape), we obtain the linear combination 
\begin{equation}\label{linearstwo}
 \sum_{t=0}^{\min(y, u - v)} h_{x} s_{(u+y-t, v+t)} - \sum_{t=0}^{\min(y-1, u-v)} h_{x+1} s_{(u+y-1-t, v+t)}. 
\end{equation}
 Applying the Pieri rule within both of the summands in \eqref{linearstwo}, and then disregarding any terms indexed by partitions of 
 length at least $3$, a telescoping argument gives us that the remaining terms are of the forms $$ s_{(x+u, y+v)}, \, s_{(x + u - 1, y + v + 
 1)}, \, \ldots, \, s_{(x + u - m, y + v + m)}, $$ writing $m = \min(x - y, u - v)$, so that 
\begin{equation}\label{LRconclude}
 s_{(x, y)} s_{(u, v)} = \sum_{k=0}^{\min(x-y, u-v)} s_{(x + u - k, y + v + k)} + \sum_{\ell(\mu) \geq 3} b_{\mu} s_{\mu}, 
\end{equation}
 for certain coefficients $b_{\mu}$. The Littlewood--Richardson rule applied to \eqref{LRconclude} then gives us an equivalent version
 of the desired result. 
\end{proof}

\begin{example}
 Letting $x = 5$, $y = 2$, $u = 3$, $v = 1$, $d = 6$, and $e = 5$, we find that $c_{52 \, 31}^{65} = 1$, and this corresponds to the unique 
 LR tableau 
\begin{equation}\label{tab65} 
 \begin{ytableau}
 *(green) \null & *(green) \null & *(green) \null & *(green) \null & *(green) \null & *(white) 1 \\ 
 *(green) \null & *(green) \null & *(white) 1 & *(white) 1 & *(white) 2 
\end{ytableau} 
\end{equation}
 where the colored cells in \eqref{tab65} are meant to illustrate the skew shape underlying the given LR tableau. The numerical values 
 associated with the purported inequalities in $ \max(x+v, y+u) \leq d \leq x + u$, are, in this case, such that $6 \leq 6 \leq 8$, 
 illustrating the equivalence in \eqref{displayLRtwo}. 
\end{example}

 We thus have, from Lemma \ref{LemLRtwo}, that LR coefficients indexed by two-rowed partitions are multiplicity-free and are, moreover, 
 determined by a single interval condition on the initial part. 

 We henceforth adopt the convention whereby a partition $\lambda$ concatenated with a tuple of the form $(0, 0, \ldots, 0)$ is identified 
 with $\lambda$. Applying Lemma \ref{LemLRtwo} to Theorem \ref{fundamentaltheorem} in the case of a two-row partition motivates the 
 following compressed notation, writing 
\begin{multline*} 
 \operatorname{triple}^{(1)}_{(d, e) \, (a, b, 1^c) \, \nu} = \sum_{ \substack{ \eta \vdash n - b + 1 \\
 0 \leq j \leq \left\lfloor \frac{b-1}{2} \right\rfloor \\ 
 0 \leq r \leq \left\lfloor \frac{n-b+1}{2} \right\rfloor } } 
 \text{{\bf 1}}_{\max( n - b + 1 - r + j, r + b - 1 - j ) \leq d \leq n - r - j } \times \\
 c_{\eta \, (b-1-j, j)}^{\nu} g_{ (n - b + 1 - r, r) \, (a, 1^{c+1}) \, \eta }
\end{multline*}
 and 
\begin{multline*} 
 \operatorname{triple}^{(2)}_{(d, e) \, (a, b, 1^c) \, \nu} = \sum_{ \substack{ \delta \vdash n - a \\
 0 \leq i \leq \left\lfloor \frac{a}{2} \right\rfloor \\ 
 0 \leq r \leq \left\lfloor \frac{n-a}{2} \right\rfloor } } 
 \text{{\bf 1}}_{\max( a - i + r, i + n - a - r ) 
 \leq d \leq n - i - r } \times \\
 c_{(a-i, i) \, \delta}^{\nu} g_{ (n - a - r, r) \, (b-1, 1^{c+1}) \, \delta}. 
\end{multline*}

\begin{lemma}\label{triple1and2}
 The relation $$ g_{(d, e) \, (a, b, 1^c) \, \nu} = \operatorname{triple}^{(1)}_{(d, e) \, (a, b, 1^c) \, \nu} - \operatorname{triple}^{(2)}_{(d, 
 e) \, (a, b, 1^c) \, \nu} $$ holds for $d \geq e$ and for $a \geq b \geq 2$ and for $c \geq 1$, letting $a + b + c = d + e =n$ and 
 letting $\nu \vdash n$. 
\end{lemma}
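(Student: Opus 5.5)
The plan is to specialize Theorem \ref{fundamentaltheorem} to $\lambda = (d, e)$ and then evaluate every Littlewood--Richardson coefficient with superscript $(d,e)$ using Lemma \ref{LemLRtwo}. After that, the two sums of Theorem \ref{fundamentaltheorem} should become exactly $\operatorname{triple}^{(1)}$ and $\operatorname{triple}^{(2)}$. The hypotheses $a \geq b \geq 2$, $c \geq 1$, $a+b+c = d+e = n$ are those of Theorem \ref{fundamentaltheorem} (with $c \geq 1$ a harmless further restriction), so the starting identity is available as stated.

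\textbf{Restricting to two-row partitions.} First I will note that $c^{(d,e)}_{\theta\,\delta} \neq 0$ forces $\theta \subseteq (d,e)$ and $\delta \subseteq (d,e)$, so both have at most two parts. The same holds for $\eta$ and $\theta$ in $c^{(d,e)}_{\eta\,\theta}$.

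In the first sum I therefore write $\delta = (b-1-j, j)$ with $0 \leq j \leq \lfloor \frac{b-1}{2} \rfloor$, and $\theta = (n-b+1-r, r)$ with $0 \leq r \leq \lfloor \frac{n-b+1}{2} \rfloor$. I use the convention that trailing zeros are dropped, so the one-row cases $j=0$ and $r=0$ are included. These bounds say precisely that the parametrized pairs are partitions and that each two-row partition occurs exactly once.

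In the second sum I write $\eta = (a-i, i)$ with $0 \leq i \leq \lfloor \frac{a}{2} \rfloor$, and $\theta = (n-a-r, r)$ with $0 \leq r \leq \lfloor \frac{n-a}{2} \rfloor$. All terms with longer partitions vanish and are discarded.

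\textbf{Applying Lemma \ref{LemLRtwo}.} In the first sum I take $(x,y) = \theta$ and $(u,v) = \delta$. The lemma's criterion $\max(x+v, y+u) \leq d \leq x+u$ becomes
\[ \max(n-b+1-r+j,\; r+b-1-j) \leq d \leq n-r-j, \]
which is the indicator appearing in $\operatorname{triple}^{(1)}$.

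In the second sum I use the symmetry $c^{(d,e)}_{\eta\,\theta} = c^{(d,e)}_{\theta\,\eta}$ and take $(x,y) = \eta = (a-i,i)$ and $(u,v) = \theta = (n-a-r,r)$. The criterion becomes
\[ \max(a-i+r,\; i+n-a-r) \leq d \leq n-i-r, \]
matching $\operatorname{triple}^{(2)}$.

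The remaining factors, $c^{\nu}_{\eta\,\delta}\, g_{\theta\,(a,1^{c+1})\,\eta}$ in the first sum and $c^{\nu}_{\eta\,\delta}\, g_{\theta\,(b-1,1^{c+1})\,\delta}$ in the second, are carried over verbatim after substituting the parametrizations.

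\textbf{Main obstacle: boundary cases.} The only real care needed is to check that Lemma \ref{LemLRtwo} is valid when some of these partitions have a single row, i.e.\ when $y = 0$ or $v = 0$. In its proof the Jacobi--Trudi step then involves $s_{(-1)} = 0$ and the Pieri sums degenerate correctly, so the interval formula still holds; I would verify this directly, for instance $c^{(d,e)}_{(x)\,(u,v)}$ via the Pieri rule. I also need to confirm that no summation index range is empty or degenerate. This holds because $b-1 \geq 1$ and $n-a = b+c \geq 3$, so $\delta$ and $\theta$ range over nonempty sets of partitions. With these checks the identity follows term by term.
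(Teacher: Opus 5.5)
Your proposal is correct and is the paper's proof, spelled out in more detail: specialize Theorem~\ref{fundamentaltheorem} to $\lambda=(d,e)$, use the vanishing of $c^{(d,e)}_{\theta\,\delta}$ and $c^{(d,e)}_{\eta\,\theta}$ to restrict to partitions with at most two rows, parametrize them, and read off the indicator conditions from Lemma~\ref{LemLRtwo}. Your check of the one-row cases ($y=0$ or $v=0$, where the $h_{x+1}s_{(y-1)}$ term drops out) is a reasonable bit of care that the paper leaves implicit. The appeal to $c^{(d,e)}_{\eta\,\theta}=c^{(d,e)}_{\theta\,\eta}$ is harmless but unnecessary, since the criterion in Lemma~\ref{LemLRtwo} is already symmetric in $(x,y)$ and $(u,v)$.
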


\begin{proof}
 This follows in a direct way by setting $\lambda = (d, e)$ in Theorem \ref{fundamentaltheorem} and by then using Lemma \ref{LemLRtwo} 
 to express the resultant LR coefficients of the forms $c_{\theta \, \delta}^{(d, e)}$ and $c_{\eta \, \theta}^{(d, e)}$. 
\end{proof}

\begin{example}
 Set $n =6$, $a = 3$, $b = 2$, $c = 1$, and let $(d, e) = (4, 2)$ and $\nu = (4, 2)$. In this case, we find that 
 $$ \operatorname{triple}^{(1)}_{42 \, 321 \, 42} = \sum_{\eta \vdash 5} c_{\eta \, 1}^{42} g_{41 \, 311 \, \eta} + \sum_{\eta \vdash 
 5} c_{\eta \, 1}^{42} g_{32 \, 311 \, \eta} = 4, $$ and this can be verified with {\tt SageMath}. Similarly, we have that 
\begin{multline*}
 \operatorname{triple}^{(2)}_{42 \, 321 \, 42} 
 = \sum_{\delta \vdash 3} c_{3 \, \delta}^{42} g_{3 \, 111 \, \delta} + \sum_{\delta \vdash 3} c_{3 \, \delta}^{42} 
 g_{21 \, 111 \, \delta} + \\ 
 \sum_{\delta \vdash 3} c_{21 \, \delta}^{42} g_{3 \, 111 \, \delta} + 
 \sum_{\delta \vdash 3} c_{21 \, \delta}^{42} g_{21 \, 111 \, \delta} = 2. 
\end{multline*}
 The above evaluations agree with the evaluation $g_{42 \, 321 \, 42} = 2$.
\end{example}

 Our goal, at this point, is to determine a complete characterization of triples $(\eta, j, r)$ associated with the above triple sum expansion 
 for the expression $\operatorname{triple}^{(1)}_{(d, e) \, (a, b, 1^c) \, \nu}$ such that the associated summand $$ \text{{\bf 1}}_{\max( n 
 - b + 1 - r + j, r + b - 1 - j ) \leq d \leq n - r - j } \, c_{\eta \, (b-1-j, j)}^{\nu} g_{ (n - b + 1 - r, r) \, (a, 1^{c+1}) \, \eta }$$ is nonvanishing 
 (i.e., is strictly positive). 
 
 Let $\eta \subseteq \nu$. Let $|\kappa / \eta| = s^{(1)}$. Let $|\nu / \kappa| = s^{(2)}$. We then define 
 \begin{multline*}
 \mathcal{N}_{\nu \, \eta \, s^{(1)} \, s^{(2)}} = \# \{ \kappa : \eta \subseteq \kappa \subseteq \nu, \kappa/\eta \, 
 \text{horizontal strip of size} \, s^{(1)}, \\ 
 \ \nu/\kappa \, \text{horizontal strip of size} \, s^{(2)} \}. 
\end{multline*}

\begin{lemma}\label{LrtoNN}
 For $\eta \vdash N =n-b+1$ and for $j$ such that $0 \leq j \leq \lfloor \frac{b-1}{2} \rfloor$, the relation $$ c_{\eta \, (b-1-j,j)}^{\nu} = 
 \mathcal{N}_{\nu \, \eta \, j \, b - 1 - j} - \mathcal{N}_{\nu \, \eta \, j - 1 \, b - j} $$ holds, with the understanding that $\mathcal{N}_{\nu 
 \, \eta \, -1 \, b} = 0$. 
\end{lemma}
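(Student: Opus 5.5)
We will prove this by writing the two-row Schur function $s_{(b-1-j,j)}$ through the Jacobi--Trudi rule \eqref{displayJT} and then applying the Pieri rule twice, exactly as in the proof of Lemma \ref{LemLRtwo}. Concretely, for $j \geq 1$ the Jacobi--Trudi determinant of size two gives
\begin{equation*}
 s_{(b-1-j,j)} = h_{b-1-j} h_{j} - h_{b-j} h_{j-1},
\end{equation*}
and this identity also holds for $j = 0$, since then the second term contains $h_{-1} = 0$. This matches the stated convention $\mathcal{N}_{\nu \, \eta \, -1 \, b} = 0$. The condition $0 \leq j \leq \lfloor \frac{b-1}{2} \rfloor$ ensures that $(b-1-j,j)$ is a genuine partition, so the determinant formula applies.

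Multiplying by $s_{\eta}$ and using commutativity, we get
\begin{equation*}
 s_{\eta} s_{(b-1-j,j)} = h_{b-1-j}\big(h_{j} s_{\eta}\big) - h_{b-j}\big(h_{j-1} s_{\eta}\big).
\end{equation*}
The key step is the iterated Pieri rule. The coefficient of $s_{\nu}$ in $h_{q} h_{p} s_{\eta}$ is the number of chains $\eta \subseteq \kappa \subseteq \nu$ such that $\kappa/\eta$ is a horizontal strip of size $p$ and $\nu/\kappa$ is a horizontal strip of size $q$. Applying first $h_p$ to $s_\eta$, which produces the intermediate shapes $\kappa$, and then $h_q$ to each $s_\kappa$ yields exactly this count, namely $\mathcal{N}_{\nu \, \eta \, p \, q}$. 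Taking $(p,q) = (j, b-1-j)$ for the first term and $(p,q) = (j-1, b-j)$ for the second, and extracting the coefficient of $s_\nu$ via \eqref{Schurdual} and \eqref{ssprod}, gives $c^{\nu}_{\eta \, (b-1-j,j)} = \mathcal{N}_{\nu \, \eta \, j \, b-1-j} - \mathcal{N}_{\nu \, \eta \, j-1 \, b-j}$.

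The main point needing care is the order of the Pieri applications. The definition of $\mathcal{N}$ fixes the inner strip to have size $s^{(1)}$ and the outer strip to have size $s^{(2)}$, so we must apply the $h_j$ (respectively $h_{j-1}$) factor first and the $h_{b-1-j}$ (respectively $h_{b-j}$) factor second, which is legitimate because the $h$'s commute. Finally, both sides vanish unless $\eta \subseteq \nu$ and $|\nu| = |\eta| + b - 1$: each chain count requires $\eta \subseteq \kappa \subseteq \nu$ with total strip size $b-1$, and the LR coefficient is zero otherwise. Hence the identity holds for all $\nu$, not only those with $\eta \subseteq \nu$.
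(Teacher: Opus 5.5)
Your proposal is correct and follows essentially the same route as the paper: the Jacobi--Trudi rule gives $s_{(b-1-j,j)} = h_{b-1-j}h_j - h_{b-j}h_{j-1}$, and two applications of the Pieri rule identify the coefficient of $s_\nu$ in each term with the corresponding $\mathcal{N}$-count. Your additional remarks on the $j=0$ case (via $h_{-1}=0$) and on both sides vanishing when $\eta$ is not contained in $\nu$ are valid details that the paper leaves implicit.
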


\begin{proof} 
 By the Jacobi--Trudi rule, we find that 
\begin{equation}\label{byJT}
 s_{(b-1-j, j)} = h_{b-1-j} h_{j} - h_{b-j} h_{j-1}, 
\end{equation}
 and \eqref{byJT} then gives us that $$ s_{\eta} s_{(b-1-j,j) } = s_{\eta} h_{j} h_{b - 1 - j} - s_{\eta} h_{j-1} h_{b - j}. $$ The Pieri rule then 
 gives us that the coefficient of $s_{\nu}$ in $s_{\eta} h_{j} h_{b - 1 - j}$ is the number of partitions $\kappa$ such that: 

\begin{enumerate}

\item The partition $\kappa$ differs from $\eta$ by a horizontal strip of size $j$; and

\item The partition $\nu$ differs from $\kappa$ by a horizontal strip of size $b - 1 - j$. 

\end{enumerate}

\noindent That is, the number of partitions $\kappa$ of the desired form is $\mathcal{N}_{\nu \, \eta \, j \, b-1-j}$. In a similar fashion 
 the coefficient of $s_{\nu}$ in $s_{\eta} h_{j-1} h_{b - j} $ is $\mathcal{N}_{\nu \, \eta \, j-1 \, b-j}$, and hence the desired result. 
\end{proof}

\begin{example}\label{exnotation}
 Let $n = 11$, $b = 5$, $j = 1$, $\eta = 43$, and $\nu = 5321$, noting that $N = 7$. In this case, the evaluation $c_{\eta \, (b - 1 - j, 
 j)}^{\nu} = c_{43 \, 31}^{5321} = 1$ corresponds to the unique LR tableau 
\begin{equation}\label{miscuniqueLR}
 \begin{ytableau}
 *(green) \null & *(green) \null & *(green) \null & *(green) \null & *(white) 1 \\ 
 *(green) \null & *(green) \null & *(green) \null \\ 
 *(white) 1 & *(white) 1 \\ 
 *(white) 2 
\end{ytableau}. 
\end{equation}
 In this case, the expression $ \mathcal{N}_{5321 \, 43 \, 1 \, 3} $ is associated with combinatorial objects
 associated with the same skew shape 
\begin{equation*}
 \begin{ytableau}
 *(green) \null & *(green) \null & *(green) \null & *(green) \null & *(white) \null \\ 
 *(green) \null & *(green) \null & *(green) \null \\ 
 *(white) \null & *(white) \null \\ 
 *(white) \null 
\end{ytableau} 
\end{equation*}
 corresponding to the LR tableau displayed in \eqref{miscuniqueLR}. To evaluate $ \mathcal{N}_{5321 \, 43 \, 1 \, 3}$, we seek partitions 
 $\kappa$ such that $\eta \subseteq \kappa \subseteq \nu$ and such that $\kappa/\eta$ is a horizontal strip of size $1$ and 
 $\nu/\kappa$ is a horizontal strip of size $3$. The unique such partition is $\kappa = 431$ and is illustrated via the colored cells in 
\begin{equation*}
 \begin{ytableau}
 *(green) \null & *(green) \null & *(green) \null & *(green) \null & *(white) \null \\ 
 *(green) \null & *(green) \null & *(green) \null \\ 
 *(blue) \null & *(white) \null \\ 
 *(white) \null 
\end{ytableau}, 
\end{equation*}
 and hence the evaluation $ \mathcal{N}_{5321 \, 43 \, 1 \, 3} = 1$, and we may similarly illustrate that $ \mathcal{N}_{5321 \, 43 \, 0 \, 
 4} = 0$. 
\end{example}

\begin{lemma}\label{gposLem}
 Let $\eta \vdash N = n - b + 1$, and let $a \geq 1$, $b \geq 1$, $c \geq 0$, $0 \leq r \leq \lfloor \frac{N}{2} \rfloor$, 
 and $n = a + b + c$. Then 
\begin{equation*}
 g_{(N-r, r) \, (a, 1^{c+1}), \eta} > 0 \Longleftrightarrow 
 \big( \eta \in \operatorname{DH}(N) \ \text{and} \ {\Xi}^{[a]}_{\eta}(r,c) > 0 \big). 
\end{equation*}
\end{lemma}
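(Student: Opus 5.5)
Apply Rosas's theorem \eqref{Rosasequiv} with $N = n-b+1$ and the hook $(a,1^{c+1})$. The sizes match, since $a + c + 1 = n - b + 1 = N$. For $1 \leq r \leq \lfloor N/2 \rfloor$ this gives directly
\[
g_{(N-r,r)\,(a,1^{c+1})\,\eta} = {\Xi}^{[a]}_{\eta}(r,c).
\]
So the equivalence reduces to one claim: ${\Xi}^{[a]}_{\eta}(r,c) > 0$ forces $\eta \in \operatorname{DH}(N)$. The case $r = 0$ is treated separately. Here $(N-r,r) = (N)$, and \eqref{gtrivialrep} gives $g_{(N)\,(a,1^{c+1})\,\eta} = \text{{\bf 1}}_{\eta = (a,1^{c+1})}$, which is exactly ${\Xi}^{[a]}_{\eta}(0,c)$. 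Since a hook $(a,1^{c+1})$ has $\eta_3 \leq 1$ whenever $\ell(\eta) \geq 3$, it lies in $\operatorname{DH}(N)$, and the equivalence holds for $r=0$.

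For $r \geq 1$, I go through the cases in the definition of $\Xi$ and check that every nonzero branch only occurs for $\eta \in \operatorname{DH}(N)$.
\begin{enumerate}
\item If $\eta = (N)$, then $\ell(\eta) = 1$, so $\eta \in \operatorname{DH}(N)$.
\item If $\eta = (1^N)$, then $\eta_3 \leq 1$ (or $\ell(\eta) \leq 2$), so again $\eta \in \operatorname{DH}(N)$.
\item If $\eta = (h,1^t)$ with $h \geq 2$, $t \geq 1$, then $\eta$ is a hook, so $\eta_3 \leq 1$ when $\ell(\eta) \geq 3$, and $\eta \in \operatorname{DH}(N)$.
\item The $\Phi$-branch explicitly requires $\eta \in \operatorname{DH}(N)$.
\item The remaining branch is identically $0$.
\end{enumerate}
Hence positivity of $\Xi$ implies $\eta \in \operatorname{DH}(N)$. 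The forward direction then follows from \eqref{Rosasequiv}. The backward direction is immediate from \eqref{Rosasequiv} as well, since the condition already includes ${\Xi}>0$.

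One might prefer an argument for $\eta \in \operatorname{DH}(N)$ that does not depend on how $\Xi$ is packaged. Such an argument is available: $g_{(N-r,r)\,(a,1^{c+1})\,\eta} \neq 0$ forces $\eta$ to fit in the union of the first two rows and first two columns. I would prove this via \eqref{gtochar}-type containment, or via Littlewood's identity \eqref{mainLittlewood} with $s_{(N-r,r)}$ expanded by Jacobi--Trudi as $h_{N-r}h_r - h_{N-r+1}h_{r-1}$. Each term $(h_p h_q) \ast s_{(a,1^{c+1})}$ is a sum of products of two hook Schur functions (by \eqref{flatKron}, since the coproduct of a hook yields hook-shaped factors). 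A product of two hooks is supported on partitions with $\eta_3 \leq 2$ by the Littlewood--Richardson rule. This route serves as a sanity check confirming that no non-double-hook $\eta$ is hidden in the $\Phi$ branch.

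The main obstacle I expect is bookkeeping around boundary cases rather than any conceptual step. I need to check that the hypotheses of Rosas's theorem match: $a \geq 1$, $c \geq 0$, $t = (N-r,r)$ with $N = n-b+1$. I also need the degenerate situations handled consistently. These include $a = 1$, where the hook is a column and \eqref{gsignrep} applies, and $r = N/2$. The reformulated $\Xi$ must agree with Rosas's case split there, in particular regarding whether $\eta_2 \geq 2$ versus hook-shaped $\eta$ overlap. I would verify these by comparing directly with \cite[Corollary 3 and Theorem 4]{Rosas2001}.
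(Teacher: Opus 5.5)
Your argument is correct and is essentially the paper's own proof: Rosas's formula \eqref{Rosasequiv}, together with the observation that every nonzero branch of $\Xi$ lies in $\operatorname{DH}(N)$. Your separate treatment of $r=0$ via \eqref{gtrivialrep} usefully covers a case the paper leaves implicit, since its version of Rosas's theorem is stated only for $r\geq 1$.

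The boundary check you defer is worth doing, but the defect sits at $\eta=(1^N)$ rather than at $a=1$. As written, that branch $\mathbf{1}_{\eta=(N-r,r)^{\operatorname{t}}}$ vanishes for every $r\geq 1$. On the other hand, $g_{(N-1,1)\,(2,1^{c+1})\,(1^N)}=1$, for example $g_{(2,1)\,(2,1)\,(1^3)}=1$. So the step ``$g>0$ hence $\Xi=g>0$'' fails there, in your proof and in the paper's. The branch must be read as $\mathbf{1}_{(a,1^{c+1})=(N-r,r)^{\operatorname{t}}}$ for the equivalence to hold. This correction leaves your $\operatorname{DH}$ argument untouched.
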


 \begin{proof}
 This follows directly from Rosas's formula, as stated in Section \ref{secRosas}, together with the definition of $\Xi^{[a]}_\eta(r,c)$. 
\end{proof}

\begin{example}
 Let $a = 2$, $b = 2$, $c = 5$, $n=9$, and $r = 2$, with $\eta = 32111$ and $N = 8$. Then $$ g_{(N-r,r) \, (a, 1^{c+1} ) \, \eta } = g_{62 
 \, 2111111 \, 32111} = 1 > 0, $$ noting the double hook shape of $\eta$, and we can check that ${\Xi}^{[2]}_{32111}(2,5) > 0$. 
 In this case, writing $$ {\Xi}_{\eta}^{[a]}(r, c) = \Phi\big( \text{{\bf n}}_{3}(\eta), 
 \text{{\bf n}}_{4}(\eta), \text{{\bf d}}_{1}(\eta), \text{{\bf d}}_{2}(\eta); \text{{\bf e}}_{\eta}^{[a]}(c), r \big) = \Phi(2,3,3,0;6,2), $$ a 
 routine calculation shows that this reduces to $ \Phi(2,3,3,0;6,2) = \text{{\bf 1}}_{2 \leq r \leq 3}$.
\end{example}

 Define $N:= n - b + 1$. We also define 
\begin{multline*}
 \mathcal{I}^{+}(\nu) = \Bigg\{ (\eta, j, r) : \eta \in \operatorname{DH}(N), 
 0 \leq j \leq \left\lfloor \frac{b-1}{2} \right\rfloor, 
 0 \leq r \leq \left\lfloor \frac{N}{2} \right\rfloor, \\
 \mathcal{N}_{\nu \, \eta \, j \, b - 1 - j } > \mathcal{N}_{\nu \, \eta \, j - 1 \, b - j}, 
 {\Xi}^{[a]}_{\eta}(r, c) > 0 \Bigg\}.
\end{multline*}

\begin{lemma}\label{iffpos}
 Let $\nu \vdash n$. For $\eta \vdash N := n - b + 1$, for $j$ such that $0 \leq j \leq \lfloor \frac{b-1}{2} \rfloor$, for $r$ such that $0 
 \leq r \leq \lfloor \frac{N}{2} \rfloor$, for $a \geq 1$, $b \geq 1$, and 
 $c \geq 0$, and for $n = a + b + c$, we have that $$ c_{\eta \, (b - 
 1 - j, j)}^{\nu} g_{(N-r,r) \, (a, 1^{c+1}) \, \eta} > 0 \Longleftrightarrow (\eta, j, r) \in \mathcal{I}^{+}(\nu). $$
\end{lemma}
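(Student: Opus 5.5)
The plan is to observe that both factors on the left are nonnegative integers: the first is an LR coefficient and the second a Kronecker coefficient. So the product is strictly positive exactly when each factor is strictly positive. I will therefore prove two independent equivalences and then intersect them. Each equivalence matches one of the defining conditions of $\mathcal{I}^{+}(\nu)$. The range conditions $0 \leq j \leq \lfloor \frac{b-1}{2} \rfloor$ and $0 \leq r \leq \lfloor \frac{N}{2} \rfloor$ are hypotheses of the statement, so they hold automatically on both sides.

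For the LR factor, I will invoke Lemma \ref{LrtoNN}, which applies verbatim under the present hypotheses $\eta \vdash N$ and $0 \leq j \leq \lfloor \frac{b-1}{2} \rfloor$. It gives
$$c_{\eta \, (b-1-j,j)}^{\nu} = \mathcal{N}_{\nu \, \eta \, j \, b-1-j} - \mathcal{N}_{\nu \, \eta \, j-1 \, b-j},$$
with the stated convention $\mathcal{N}_{\nu \, \eta \, -1 \, b} = 0$ when $j=0$. Hence $c_{\eta \, (b-1-j,j)}^{\nu} > 0$ if and only if $\mathcal{N}_{\nu \, \eta \, j \, b-1-j} > \mathcal{N}_{\nu \, \eta \, j-1 \, b-j}$. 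This is precisely the counting condition in $\mathcal{I}^{+}(\nu)$. Nonnegativity of the difference does not even need to be used, since we only need the equivalence with strict positivity.

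For the Kronecker factor, I will invoke Lemma \ref{gposLem}, whose hypotheses $a \geq 1$, $b \geq 1$, $c \geq 0$, $0 \leq r \leq \lfloor N/2 \rfloor$ and $n = a+b+c$ coincide with those here. It gives
$$g_{(N-r,r) \, (a,1^{c+1}) \, \eta} > 0 \Longleftrightarrow \eta \in \operatorname{DH}(N) \text{ and } \Xi^{[a]}_{\eta}(r,c) > 0,$$
which supplies the remaining two membership conditions.

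Combining the two equivalences: the product is positive if and only if all defining conditions of $\mathcal{I}^{+}(\nu)$ hold, that is, if and only if $(\eta, j, r) \in \mathcal{I}^{+}(\nu)$.

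The only point I expect to need care is the boundary behavior. One case is $j = 0$, where Lemma \ref{LrtoNN} needs the vanishing convention. The other is $r = 0$, where $(N-r,r) = (N)$ and $\Xi^{[a]}_\eta(0,c) = \mathbf{1}_{\eta = (a,1^{c+1})}$. There I will check that Lemma \ref{gposLem} indeed covers $r=0$. This is consistent with \eqref{gtrivialrep}, since $N = a + c + 1$ and the Kronecker coefficient is $\mathbf{1}_{\eta = (a,1^{c+1})}$. I will also confirm that $(a,1^{c+1})$ is a double hook, so the extra condition $\eta \in \operatorname{DH}(N)$ is harmless in this case. Beyond that, the argument is just a conjunction of the two lemmas.
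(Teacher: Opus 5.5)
Your argument is the paper's argument. The paper proves the lemma directly from Lemma~\ref{LrtoNN} (for the LR factor) and Lemma~\ref{gposLem} (for the Kronecker factor), and your remark that both factors are nonnegative integers, so the product is positive exactly when each factor is, supplies the step the paper leaves implicit. One caution applies equally to the paper: Lemma~\ref{gposLem} inherits a slip in the $\eta=(1^N)$ clause of $\Xi$, which should read $\text{{\bf 1}}_{(a,1^{c+1})=(N-r,r)^{\operatorname{t}}}$ rather than $\text{{\bf 1}}_{\eta=(N-r,r)^{\operatorname{t}}}$; for instance, with $a=b=2$, $c=1$, $\nu=(1^5)$, $\eta=(1^4)$, $j=0$, $r=1$, the product is $1\cdot 1$ while $\Xi^{[2]}_{(1^4)}(1,1)=0$, so the boundary check you planned for $r=0$ is also needed at $\eta=(1^N)$, where the statement as written fails.
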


\begin{proof}
 This follows in a direct way from Lemmas \ref{LrtoNN} and \ref{gposLem}. 
 \end{proof}

\begin{example}\label{exanalogy}
 Let $a =3$, $b = 5$, $c = 3$, $j = 1$, $n = 11$, $r = 2$, and set $\nu = 5321$ and $\eta = 421$, with $N = 7$ according to the notation in 
 Lemma \ref{iffpos}. Observe that $\eta$ satisfies the $\operatorname{DH}$-conditions, and that the required bounds on $j$ and $r$ 
 hold. In this case, we are concerned with the value $$ c_{421 \, 31}^{5321} g_{52 \, 31111 \, 421 } = 3 \cdot 1. $$ The value $c_{421 \, 
 31}^{5321}$, according to Lemma \ref{LrtoNN}, may be rewritten so that $c_{421 \, 31}^{5321} = \mathcal{N}_{5321 \, 421 \, 1 \, 
 3} - \mathcal{N}_{5321 \, 421 \, 0 \, 4}$, with $ \mathcal{N}_{5321 \, 421 \, 1 \, 3} = 4$ and $\mathcal{N}_{5321 \, 421 \, 0 \, 4} = 1$. 
 Being consistent with our notation in Example \ref{exnotation}, the combinatorial objects associated with the valuation 
 $ \mathcal{N}_{5321 \, 421 \, 1 \, 3} = 4$ are illustrated below. 

$$ \begin{ytableau}
 *(green) \null & *(green) \null & *(green) \null & *(green) \null & *(blue) \null \\ 
 *(green) \null & *(green) \null & *(white) \null \\ 
 *(green) \null & *(white) \null \\ 
 *(white) \null 
\end{ytableau} 
 \ \ \ \ \ \ \ \begin{ytableau}
 *(green) \null & *(green) \null & *(green) \null & *(green) \null & *(white) \null \\ 
 *(green) \null & *(green) \null & *(blue) \null \\ 
 *(green) \null & *(white) \null \\ 
 *(white) \null 
\end{ytableau} 
 $$

$$ \begin{ytableau}
 *(green) \null & *(green) \null & *(green) \null & *(green) \null & *(white) \null \\ 
 *(green) \null & *(green) \null & *(white) \null \\ 
 *(green) \null & *(blue) \null \\ 
 *(white) \null 
\end{ytableau} 
 \ \ \ \ \ \ \ \begin{ytableau}
 *(green) \null & *(green) \null & *(green) \null & *(green) \null & *(white) \null \\ 
 *(green) \null & *(green) \null & *(white) \null \\ 
 *(green) \null & *(white) \null \\ 
 *(blue) \null 
\end{ytableau} $$

\noindent One may similarly illustrate the valuation $\mathcal{N}_{5321 \, 421 \, 0 \, 4} = 1$, and one may check that ${\Xi}^{[3]}_{421}(2, 
 3) =1 > 0$. 
\end{example}

 Set $M := n - a$. Define 
\begin{multline*}
 \mathcal{I}^{-}(\nu) = \Bigg\{ (\delta, i, r) : \delta \in \operatorname{DH}(M), 
 \, 0 \leq i \leq \left\lfloor \frac{a}{2} \right\rfloor, 
 0 \leq r \leq \left\lfloor \frac{M}{2} \right\rfloor, \\ 
 \mathcal{N}_{\nu \, \delta \, i \, a - i} > 
 \mathcal{N}_{\nu \, \delta \, i - 1 \, a - i + 1}, {\Xi}^{[b-1]}_{\delta}(r,c) > 0 \Bigg\}.
\end{multline*} 

\begin{lemma}\label{iffneg}
 Let $\nu \vdash n$. For $\delta \vdash M = n - a$, for $i$ such that $0 \leq i \leq \lfloor \frac{a}{2} \rfloor$, and for $r$ such that $0 \leq r 
 \leq \lfloor \frac{M}{2} \rfloor$, we have that $$ c_{(a-i, i) \, \delta}^{\nu} g_{(M-r, r) \, (b-1, 1^{c+1}) \, \delta} > 0 
 \Longleftrightarrow (\delta, i, r) \in \mathcal{I}^{-}(\nu). $$
\end{lemma}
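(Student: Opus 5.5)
The plan is to mirror the proof of Lemma \ref{iffpos}, swapping the roles of the two factors. The product $c_{(a-i, i) \, \delta}^{\nu} g_{(M-r, r) \, (b-1, 1^{c+1}) \, \delta}$ is a product of two nonnegative integers, so it is positive if and only if both factors are positive. I will treat the two factors separately and then intersect the two conditions, which will give exactly the defining conditions of $\mathcal{I}^{-}(\nu)$.

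For the Littlewood--Richardson factor, I first use the symmetry $c_{(a-i,i)\,\delta}^{\nu} = c_{\delta \, (a-i,i)}^{\nu}$, which follows from commutativity of the product in \eqref{ssprod}. I then repeat the argument of Lemma \ref{LrtoNN} with $b-1$ replaced by $a$ and $j$ replaced by $i$. By Jacobi--Trudi, $s_{(a-i,i)} = h_{a-i}h_i - h_{a-i+1}h_{i-1}$. The Pieri rule applied twice to $s_\delta h_i h_{a-i}$ and to $s_\delta h_{i-1} h_{a-i+1}$ then gives
\[
c_{(a-i,i)\,\delta}^{\nu} = \mathcal{N}_{\nu\,\delta\,i\,a-i} - \mathcal{N}_{\nu\,\delta\,i-1\,a-i+1},
\]
with the convention that $\mathcal{N}_{\nu\,\delta\,-1\,a+1}=0$. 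Here the range $0 \le i \le \lfloor a/2 \rfloor$ guarantees that $(a-i,i)$ is a partition. Since LR coefficients are nonnegative, positivity of this factor is equivalent to the strict inequality $\mathcal{N}_{\nu\,\delta\,i\,a-i} > \mathcal{N}_{\nu\,\delta\,i-1\,a-i+1}$ appearing in $\mathcal{I}^{-}(\nu)$.

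For the Kronecker factor, I apply Lemma \ref{gposLem} with the hook $(b-1,1^{c+1})$ in place of $(a,1^{c+1})$. Its total size is $b-1+c+1 = b+c = n-a = M$, and the partitions $(M-r,r)$ and $\delta$ are both partitions of $M$. Rosas's theorem (Section \ref{secRosas}) is stated for any $N$, any hook $(a',1^{c+1})$ with $a' \ge 1$, and any $\nu \vdash N$. Specialising to $N = M$ and $a' = b-1$ gives $g_{(M-r,r)\,(b-1,1^{c+1})\,\delta} = \Xi^{[b-1]}_\delta(r,c)$. The hypothesis $a' = b-1 \ge 1$ holds because $b \ge 2$ throughout Section \ref{twonearhook}; this is the implicit standing assumption under which $\mathcal{I}^{-}$ is defined.

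Positivity of the Kronecker factor is then equivalent to $\delta \in \operatorname{DH}(M)$ together with $\Xi^{[b-1]}_\delta(r,c) > 0$. This uses the fact, already invoked in Lemma \ref{gposLem}, that $\Xi$ vanishes off double hooks, apart from the hook and one-row cases, which are themselves in $\operatorname{DH}$.

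I expect the only real obstacle to be this bookkeeping in re-indexing Rosas's formula. Specifically, I must check that the case $r=0$ is covered: there $\Xi^{[b-1]}_\delta(0,c) = \mathbf{1}_{\delta=(b-1,1^{c+1})}$, which agrees with \eqref{gtrivialrep}. I also need $M \ge 2$ or a degenerate-case check so that the range $1 \le r \le \lfloor M/2 \rfloor$ matches. Once these are checked, combining the two equivalences yields the stated characterisation.
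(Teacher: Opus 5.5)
Your proposal is correct and is essentially the paper's proof, which says only that the lemma follows directly from Lemmas~\ref{LrtoNN} and~\ref{gposLem}. You spell out the re-indexing the paper leaves implicit: the Jacobi--Trudi/Pieri computation with $b-1$ and $j$ replaced by $a$ and $i$, and Rosas's formula with $N=M$ and hook $(b-1,1^{c+1})$.

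One caution, shared with the paper, in exactly the bookkeeping you flagged. The printed $\eta=(1^N)$ case of $\Xi$, namely $\text{{\bf 1}}_{\eta=(N-r,r)^{\operatorname{t}}}$, is $0$ for every $r\geq 1$, whereas \eqref{gsignrep} gives $g_{(M-1,1)\,(2,1^{c+1})\,(1^M)}=1$. So for $b=3$, $r=1$, $\delta=(1^M)$ (e.g.\ with $i=0$ and $\nu=(a+1,1^{M-1})$), the stated equivalence fails unless that case is read as $\text{{\bf 1}}_{(b-1,1^{c+1})=(M-r,r)^{\operatorname{t}}}$.
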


\begin{proof}
 This follows in a direct way from Lemmas \ref{LrtoNN} and \ref{gposLem}. 
\end{proof}

\begin{example}
 Let $a = 3$, $b = 3$, $c = 2$, $i = 1$, $n = 8$, and $r = 2$, and set $\nu = 4211$ and $\delta = 221$. In this case, we are concerned with 
 the value $$ c_{21 \, 221}^{4211} g_{32 \, 2111 \, 221} = 1, $$ together with the values of $\mathcal{N}_{4211 \, 221 \, 1 \, 2} = 2$ and
 $\mathcal{N}_{4211 \, 221 \, 0 \, 3} = 1$. The combinatorial objects associated with the valuation $\mathcal{N}_{4211 \, 221 \, 1 \, 
 2} = 2$ may be illustrated with $$ \begin{ytableau}
 *(green) \null & *(green) \null & *(blue) \null & *(white) \null \\ 
 *(green) \null & *(green) \null \\ 
 *(green) \null \\ 
 *(white) \null 
\end{ytableau} \ \ \ \ \ \ \ \text{and} \ \ \ \ \ \ \ \begin{ytableau}
 *(green) \null & *(green) \null & *(white) \null & *(white) \null \\ 
 *(green) \null & *(green) \null \\ 
 *(green) \null \\ 
 *(blue) \null 
\end{ytableau}, $$ by analogy with Example \ref{exanalogy}, and one may similarly illustrate $\mathcal{N}_{4211 \, 221 \, 0 \, 3} = 1$. We 
 may also check that $\Xi^{[b]}_{221}(2,2) = 1 > 0$. 
\end{example}

 Let $n = a + b + c$. As above, define $N:= n - b + 1$ and $M := n - a$. Consider the following refinements of the $\mathcal{I}$-index 
 sets above, writing 
\begin{multline*}
 \mathcal{J}^{+}_{d}(\nu) = \big\{ (\eta, j, r) \in \mathcal{I}^{+}(\nu) : \\ 
 \max(N - r + j, r + b - 1 - j) \leq d \leq N +b - 1 - r - j \big\}
\end{multline*}
 and 
\begin{multline*}
 \mathcal{J}^{-}_{d}(\nu) = \big\{ (\delta, i, r) \in \mathcal{I}^{-}(\nu) : \max(a-i+r, i+M-r) \leq d \leq a + M - i - r \big\}
\end{multline*}
 Now, consider the following refinements of the $\operatorname{triple}^{(1)}$- and $\operatorname{triple}^{(2)}$-sums given 
 above, writing 
\begin{equation}\label{rewritetriple3} 
 \operatorname{triple}^{(3)}_{(d, e) \, (a, b, 1^{c}) \, \nu } = \sum_{ (\eta, j, r) \in \mathcal{J}^{+}_{d}(\nu) } c_{\eta \, (b - 
 1 - j, j)}^{\nu} g_{(N - r, r) \, (a, 1^{c+1}) \, \eta } 
\end{equation}
 and 
\begin{equation}\label{rewritetriple4}
 \operatorname{triple}^{(4)}_{(d, e) \, (a, b, 1^{c}) \, \nu} = \sum_{(\delta, i, r) \in \mathcal{J}^{-}_{d}(\nu) } c_{(a - i, i) \, 
 \delta }^{\nu} g_{(M - r, r) \, (b - 1, 1^{c + 1}) \, \delta}. 
\end{equation}

\begin{remark}
 Observe that the parameter/variable $e$ involved in \eqref{rewritetriple3} and \eqref{rewritetriple4} and in the Theorem below is not 
 \emph{directly} involved in the above definiti-ons/constructions for the index sets 
 $\mathcal{J}^{+}_{d}(\nu)$ and $ \mathcal{J}^{-}_{d}(\nu)$. This is because it is \emph{implicit} that $d+e=n$, and this is made explicit 
 in Theorem \ref{triple3minus4} below. 
\end{remark}

\begin{theorem}\label{triple3minus4}
 The relation $$ g_{(d, e) \, (a, b, 1^c) \, \nu} = \operatorname{triple}^{(3)}_{(d, e) \, (a, b, 1^c) \, \nu} - \operatorname{triple}^{(4)}_{(d, e)
 \, (a, b, 1^c) \, \nu} $$ holds for $d \geq e$ and for $a \geq b \geq 2$ and for $c \geq 1$, letting $a + b + c = d + e =n$ and letting 
 $\nu \vdash n$ (letting it be understood, as above, that $N = n - b + 1$ and $M = n - a$, in the index sets $\mathcal{J}_{d}^{+}(\nu)$ 
 and $\mathcal{J}_{d}^{-}(\nu)$). 
\end{theorem}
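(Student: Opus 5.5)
The plan is to start from Lemma \ref{triple1and2}, which already gives $g_{(d,e)\,(a,b,1^c)\,\nu} = \operatorname{triple}^{(1)} - \operatorname{triple}^{(2)}$ under exactly the hypotheses of the theorem. It then suffices to show separately that $\operatorname{triple}^{(1)}_{(d,e)\,(a,b,1^c)\,\nu} = \operatorname{triple}^{(3)}_{(d,e)\,(a,b,1^c)\,\nu}$ and $\operatorname{triple}^{(2)}_{(d,e)\,(a,b,1^c)\,\nu} = \operatorname{triple}^{(4)}_{(d,e)\,(a,b,1^c)\,\nu}$. In each case the point is that the two sums have the same summands, but the refined sum ranges only over the index triples on which the summand can be nonzero.

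For the first identity, I would rewrite each summand of $\operatorname{triple}^{(1)}$ using $N = n-b+1$. The indicator becomes $\mathbf{1}_{\max(N-r+j,\, r+b-1-j) \le d \le N+b-1-r-j}$, since $n - r - j = N + b - 1 - r - j$, and the Kronecker factor becomes $g_{(N-r,r)\,(a,1^{c+1})\,\eta}$. All factors are nonnegative: the indicator, the LR coefficient, and the Kronecker coefficient. Hence a summand is nonzero exactly when the indicator equals $1$ and $c^{\nu}_{\eta\,(b-1-j,j)}\, g_{(N-r,r)\,(a,1^{c+1})\,\eta} > 0$. The index ranges $0 \le j \le \lfloor (b-1)/2 \rfloor$ and $0 \le r \le \lfloor N/2 \rfloor$ match the hypotheses of Lemma \ref{iffpos}, which identifies the second condition with $(\eta,j,r) \in \mathcal{I}^{+}(\nu)$. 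Adding the indicator condition gives precisely $(\eta,j,r) \in \mathcal{J}^{+}_{d}(\nu)$. Dropping the zero terms and the now-redundant indicator (which equals $1$ on $\mathcal{J}^{+}_{d}(\nu)$) turns $\operatorname{triple}^{(1)}$ into $\operatorname{triple}^{(3)}$ as displayed in \eqref{rewritetriple3}. One small point needs checking: the triples in $\mathcal{J}^{+}_d(\nu)$ have $\eta \in \operatorname{DH}(N) \subseteq \{\eta \vdash N\}$, so no new terms are introduced and the index sets match.

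The second identity is handled the same way with $M = n-a$. The indicator $\mathbf{1}_{\max(a-i+r,\, i+n-a-r) \le d \le n-i-r}$ becomes $\mathbf{1}_{\max(a-i+r,\, i+M-r) \le d \le a+M-i-r}$. Lemma \ref{iffneg} replaces positivity of $c^{\nu}_{(a-i,i)\,\delta}\, g_{(M-r,r)\,(b-1,1^{c+1})\,\delta}$ by membership in $\mathcal{I}^{-}(\nu)$, and the indicator cuts this down to $\mathcal{J}^{-}_{d}(\nu)$. This yields $\operatorname{triple}^{(4)}$ as in \eqref{rewritetriple4}.

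The main obstacle I expect is hypothesis bookkeeping rather than any real mathematics. Lemma \ref{iffneg} relies on Lemma \ref{gposLem} applied with $b-1$ in place of $a$, with the size $M$ in place of $N$, and with the relation $M = (b-1) + (c+1) + 1$ replaced accordingly. I would verify that Rosas's theorem still applies to a hook $(b-1,1^{c+1})$ of size $M$, using $b-1 \ge 1$ since $b \ge 2$. I would also treat the degenerate value $r=0$, which Rosas's statement excludes, separately via \eqref{gtrivialrep}: the case $r=0$ gives $g_{(N)\,h\,\eta} = \mathbf{1}_{\eta=h}$, and this agrees with the definition $\Xi^{[a]}_\eta(0,c) = \mathbf{1}_{\eta=(a,1^{c+1})}$ and its analogue for $\Xi^{[b-1]}_\delta(0,c)$. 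Once these routine checks are made, the theorem follows by subtracting the two identities.
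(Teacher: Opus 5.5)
Your reduction is the paper's own one-line argument, spelled out. You start from Lemma \ref{triple1and2}, use nonnegativity of all factors, and invoke Lemmas \ref{iffpos} and \ref{iffneg} to absorb the indicators into $\mathcal{J}^{\pm}_{d}(\nu)$. Your rewriting of the intervals in terms of $N$ and $M$ and your treatment of $r=0$ are fine. The pruning only needs one direction: every index with a nonzero summand must lie in $\mathcal{J}^{\pm}_{d}(\nu)$, since extra zero terms are harmless. That direction fails, and it fails precisely in the ``routine'' bookkeeping you defer to Lemma \ref{gposLem}.

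For $\eta=(1^N)$ and $r\geq 1$, the definition sets $\Xi^{[a]}_{\eta}(r,c)=\mathbf{1}_{(1^N)=(N-r,r)^{\operatorname{t}}}$. This is always $0$, because $(N-r,r)^{\operatorname{t}}=(2^r,1^{N-2r})$. By \eqref{gsignrep}, however, the true value is $g_{(N-r,r)\,(a,1^{c+1})\,(1^N)}=\mathbf{1}_{(a,1^{c+1})^{\operatorname{t}}=(N-r,r)}=\mathbf{1}_{a=2}\mathbf{1}_{r=1}$. Since $a=b=2$ is allowed, Lemmas \ref{gposLem} and \ref{iffpos} fail, and so does the statement as written.

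For a concrete counterexample, take $(a,b,c)=(2,2,1)$, $(d,e)=(3,2)$, $\nu=(1^5)$.
\begin{itemize}
\item The left-hand side is $g_{(3,2)\,(2,2,1)\,(1^5)}=1$.
\item $\operatorname{triple}^{(1)}=1$, coming from the single index $((1^4),0,1)$, where $c^{(1^5)}_{(1^4)\,(1)}\,g_{(3,1)\,(2,1,1)\,(1^4)}=1$ and the indicator equals $1$. Also $\operatorname{triple}^{(2)}=0$, consistent with Lemma \ref{triple1and2}.
\item However, $(1^4)$ is the only $\eta$ passing the $\mathcal{N}$-test, and $\Xi^{[2]}_{(1^4)}(r,1)=0$ for $r=0,1,2$. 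Hence $\mathcal{J}^{+}_{3}(\nu)=\varnothing$.
\item One also checks $\mathcal{J}^{-}_{3}(\nu)=\varnothing$, so $\operatorname{triple}^{(3)}-\operatorname{triple}^{(4)}=0\neq 1$.
\end{itemize}
The analogous branch of $\Xi^{[b-1]}$ at size $M$ can cause the same omission on the negative side when $b=3$.

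To make your argument valid, you have two options. You can correct that branch to $\mathbf{1}_{(a,1^{c+1})^{\operatorname{t}}=(N-r,r)}$ and genuinely check the remaining branches of $\Xi$ against Rosas's theorem, with $N$ replaced by $M$ for $\mathcal{I}^{-}$. More simply, you can define $\mathcal{I}^{\pm}(\nu)$ directly by positivity of the summand, which makes the pruning step a tautology.
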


\begin{proof}
 This follows from Lemmas \ref{triple1and2}, \ref{iffpos}, and \ref{iffneg}. 
\end{proof}

\begin{example}\label{extriple3and4}
 Letting $a = 3$, $b = 2$, $c = 2$, $d = 4$, $e=3$, and $n =7$, and letting $\nu = 3211$, we find that 
\begin{multline*}
 \mathcal{J}_{d}^{+} = \big\{ (321, 0, 2), (321, 0, 3), (3111, 0, 2), (3111, 0, 3), \\ 
 (2211, 0, 2), (2211, 0, 3) \big\}
\end{multline*}
 and that the associated sum $ \operatorname{triple}^{(3)}_{(d, e) \, (a, b, 1^{c}) \, \nu }$ satisfies 
\begin{multline*}
 \operatorname{triple}_{43 \, 3211 \, 3211}^{(3)} = c_{321 \, 1}^{3211} g_{42 \, 3111 \, 321} + c_{321 \, 1}^{3211} g_{33 \, 3111 \, 
 321} + c_{3111 \, 1}^{3211} g_{42 \, 3111 \, 3111} + \\ 
 c_{3111 \, 1}^{3211} g_{33 \, 3111 \, 3111} + c_{2211 \, 1}^{3211} g_{42 \, 3111 \, 2211} + c_{2211 \, 1}^{3211} g_{33 \, 3111 \, 2211}, 
\end{multline*}
 and this numerically reduces so that 
\begin{equation}\label{Ntriple3}
 \operatorname{triple}_{43 \, 3211 \, 3211}^{(3)} = 1 \cdot 2 + 1 \cdot 1 + 1 \cdot 2 + 1 \cdot 1 + 1 \cdot 1 + 1 \cdot 1, 
\end{equation}
 noting the desired positivity of all of the terms on the right of \eqref{Ntriple3}. Similarly, we find that $$ \mathcal{J}^{-}_{d} = \big\{ (22,
 1, 2), (211, 0, 1), (211, 1, 1) \big\} $$ and that the associated sum $ \operatorname{triple}^{(4)}_{(d, e) \, (a, b, 1^{c}) \, \nu }$ satisfies 
 $$ \operatorname{triple}_{43 \, 3211 \, 3211}^{(4)} = c_{21 \, 22}^{3211} g_{22 \, 1111 \, 22} + c_{3 \, 211}^{3211} g_{31 \, 1111 \, 211} 
 + c_{21 \, 211}^{3211} g_{31 \, 1111 \, 211}, $$ and this numerically reduces so that 
\begin{equation}\label{counterto1}
 \operatorname{triple}_{43 \, 3211 \, 3211}^{(4)} = 1 \cdot 1 + 1 \cdot 1 + 2 \cdot 1, 
\end{equation}
 and this agrees with the desired evaluation $ g_{43 \, 3211 \, 3211} = 4 $ and with Theorem \ref{triple3minus4}. This example shows 
 that, even after restricting to $\mathcal{J}_{d}^{-}(\nu)$, the surviving terms need not all be equal to $1$. 
\end{example}

\section{Combinatorial interpretations}\label{secinter}
 Our strategy, at this point, is to impose restrictions on $g_{(d, e) \, (a, b, 1^c) \, \nu} $ (or, rather, its indices) in order to guarantee that if 
 $(\delta, i, r) \in \mathcal{J}^{-}_{d}(\nu)$, then each term associated with the right-hand side of \eqref{rewritetriple4} is equal to $1$, i.e., 
 so that the whole sum on the right of \eqref{rewritetriple4} is equal to the cardinality of the index set $\mathcal{J}^{-}_{d}(\nu)$, which, 
 as above, has an explicit combinatorial interpretation. This leads us to consider natural restrictions on the cardinality of 
 $\mathcal{J}_{d}^{-}(\nu)$, as below. As a natural place to start, we begin with the case such that $\mathcal{J}_{d}^{-}(\nu)$ is a 
 singleton set, and we later consider the $\mathcal{J}_{d}^{-}(\nu) = \varnothing$ case (which may be seen as a more exceptional case). 

\subsection{Singleton index sets of the form $\mathcal{J}_{d}^{-}(\nu)$}

\begin{lemma}\label{Lemsize1}
 Let $a \geq 2$, $b = 2$, $c \geq 1$, $d$, $e$ and $\mathcal{S}$ be integers and let $\nu$ be a partition of $n$, with $a+b+c = d + e = 
 n$ and $d \geq e$ (and we again let $N = n - b + 1$ and $M = n - a$ be as above). Moreover, let $1 \leq \mathcal{S} \leq \lfloor 
 \frac{c+2}{2} \rfloor$, and suppose that 
\begin{enumerate}

\item $\nu = (a+2, 2^{\mathcal{S}-1}, 1^{c+2-2\mathcal{S}})$; and 

\item $\max(a + \mathcal{S}, c + 2 - \mathcal{S}) \leq d \leq a + c + 2 -\mathcal{S}$.

\end{enumerate}

\noindent Then $\left| \mathcal{J}_{d}^{-}(\nu) \right| = 1$, and, moreover, the unique term (in this case) within
 $ \operatorname{triple}_{(d, e) \, (a, b, 1^c) \, \nu}^{(4)}$ is equal to $1$. 
\end{lemma}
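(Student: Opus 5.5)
The plan is to exploit the fact that $b=2$ collapses the negative sum in Theorem~\ref{triple3minus4} to a problem involving only the sign representation and a single Littlewood--Richardson coefficient. With $b-1=1$, the hook $(b-1,1^{c+1})=(1^{c+2})$ is the sign partition of $M=n-a=c+2$. By \eqref{gsignrep}, we get
$$g_{(M-r,r)\,(1^{c+2})\,\delta}=\text{{\bf 1}}_{\delta=(M-r,r)^{\operatorname{t}}}=\text{{\bf 1}}_{\delta=(2^r,1^{M-2r})}.$$
This is consistent with Rosas's $\Xi^{[1]}_\delta(r,c)$ via Lemma~\ref{gposLem}, and such a $\delta$ is automatically in $\operatorname{DH}(M)$. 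So, by Lemma~\ref{iffneg}, a triple $(\delta,i,r)$ lies in $\mathcal{I}^-(\nu)$ exactly when $\delta=(2^r,1^{M-2r})$ and $c^{\nu}_{(a-i,i)\,\delta}>0$, with $0\le i\le\lfloor a/2\rfloor$ and $0\le r\le\lfloor M/2\rfloor$. Every term of $\operatorname{triple}^{(4)}$ therefore equals $c^\nu_{(a-i,i)\,(2^r,1^{M-2r})}$.

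The next step is to determine for which $(i,r)$ this LR coefficient is nonzero when $\nu=(a+2,2^{\mathcal S-1},1^{c+2-2\mathcal S})$. Containment $(a-i,i)\subseteq\nu$ gives $i\le 2$. The first row of $\nu/(a-i,i)$ has $i+2$ cells, and in an LR tableau these must all be $1$'s. The number of $1$'s equals $\delta_1\le 2$, so $i=0$ and $\delta_1=2$, that is, $r\ge1$. The skew shape $\nu/(a)$ then consists of:
\begin{itemize}
\item two cells at the end of row~1;
\item a $2$-column block in rows $2,\dots,\mathcal S$;
\item a single column in rows $\mathcal S+1,\dots,c+2-\mathcal S$.
\end{itemize}
I will argue that the Yamanouchi condition on the reading word forces a unique filling. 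Row~1 is $1\,1$. Column~1 below row~1 reads $2,3,\dots$ from the top, because it is strictly increasing and each new letter must be allowed by the reading word. Column~2 in rows $2,\dots,\mathcal S$ likewise reads $2,\dots,\mathcal S$. Hence the content is $(2^{\mathcal S},1^{c+2-2\mathcal S})$, which forces $r=\mathcal S$ and gives $c^\nu_{(a)\,\delta}=1$.

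The case $a=2$, where the row-1 cells sit directly above column~2 of the block, needs separate care; that is where I expect the main obstacle. There I would redo the Yamanouchi check cell by cell. Alternatively, I would use the Pieri-type expansion from Lemma~\ref{LrtoNN}: with $i=0$ the subtracted $\mathcal N$ term vanishes, so the coefficient equals $\mathcal N_{\nu\,\delta\,0\,a}$. Only $\kappa=\delta$ is allowed, and $\nu/\delta$ is the single row segment of length $a$ in row~1, which is a horizontal strip. This gives the value $1$ cleanly and, via the symmetry $c^\nu_{\lambda\mu}=c^\nu_{\mu\lambda}$, sidesteps the overlap issue. It also shows the coefficient is $0$ for every other $r$, since $\nu/\delta$ must be a horizontal strip of size $a$ with $\delta$ of the given two-column form.

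Finally, I will impose the $\mathcal{J}^-_d$ constraint for the unique surviving triple $(\delta,i,r)=((2^{\mathcal S},1^{c+2-2\mathcal S}),0,\mathcal S)$. This constraint reads $\max(a+\mathcal S,\,M-\mathcal S)\le d\le a+M-\mathcal S$ with $M=c+2$, which is exactly hypothesis~2. Also $\mathcal S\le\lfloor M/2\rfloor$ holds by the assumed range of $\mathcal S$. Hence $|\mathcal J^-_d(\nu)|=1$, and the unique term is $c^\nu_{(a)\,\delta}\,g_{(M-\mathcal S,\mathcal S)\,(1^{M})\,\delta}=1\cdot1=1$.
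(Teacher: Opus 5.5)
Your proof is correct. It uses the same ingredients as the paper: the sign-representation identity, Lemma~\ref{LrtoNN}/Pieri, and the $d$-window. The order in which you apply them, however, genuinely matters.

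The paper's sequence is as follows. It starts from $\mathcal{N}_{\nu\,\delta\,i\,a-i}>0$ and deduces $\delta\subseteq\nu$. It then asserts that any subpartition of $\nu$ of size $c+2$ has exactly two cells in row~$1$, and so equals $\delta^{\ast}$. Only after that does it use the $\mathcal{N}$-counts (all equal to $1$ for $\delta^{\ast}$) to force $i=0$, and \eqref{gsignrep} to force $r=\mathcal{S}$.

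That intermediate assertion is false as stated. For $a=3$, $c=1$, $\mathcal{S}=1$ one has $\nu=(5,1)$, and $(3)\subseteq\nu$ has size $c+2$ but three cells in row~$1$. What rescues the identification is the bound $\delta_1\le 2$. You obtain that bound up front by applying \eqref{gsignrep} first, which restricts $\delta$ to $(2^r,1^{M-2r})$.

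From there your argument is complete:
\begin{enumerate}
\item The first-row argument rules out $i\ge 1$ and $r=0$ simultaneously. The $i+2$ first-row cells of $\nu/(a-i,i)$ must all be $1$'s, yet only $\delta_1\le 2$ ones are available.
\item The Pieri count $\mathcal{N}_{\nu\,\delta\,0\,a}$ then forces $r=\mathcal{S}$, with coefficient $1$.
\item You also check that the surviving triple satisfies the $d$-window and $\mathcal{S}\le\lfloor M/2\rfloor$, which the paper leaves implicit.
\end{enumerate}
The paper's $\mathcal{N}$-count step is quicker once $\delta=\delta^{\ast}$ is known. Your ordering is what actually justifies that identification.

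One small correction: the $a=2$ ``obstacle'' does not exist. For every $a\ge 2$, the two row-$1$ cells of $\nu/(a)$ lie in columns $a+1$ and $a+2$, both at least $3$. They are therefore disjoint from the two-column block below. After the two $1$'s, the remaining straight-shape filling, with entries lowered by one, must be an LR filling of a straight shape, hence superstandard. So the Yamanouchi analysis needs no separate case, and your Pieri fallback covers it regardless.
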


\begin{proof}
 We now isolate a family for which the negative contribution 
\(\operatorname{triple}^{(4)}\) collapses to a single term of value \(1\). Let 
 $$ \delta^{\ast} := \big( 2^{\mathcal{S}}, 1^{c+2-2\mathcal{S}} \big) \ \ \ \text{and} 
 \ \ \ M = n - a = c + 2. $$
 We claim that 
\begin{equation}\label{Jdminussingle}
 \mathcal{J}_{d}^{-}(\nu) = \big\{ (\delta^{\ast}, 0, \mathcal{S} ) \big\} 
\end{equation}
 and that the unique term in $\operatorname{triple}^{(4)}$ corresponding to the singleton index set in \eqref{Jdminussingle} is 
 equal to $1$. 

 Let $(\delta, i, r) \in \mathcal{J}_{d}^{-}(\nu)$. From the containment $\mathcal{J}_{d}^{-}(\nu) \subseteq \mathcal{I}^{-}(\nu)$, we obtain 
 the inequality $$ \mathcal{N}_{\nu \, \delta \, i \, a - i} > \mathcal{N}_{\nu \, \delta \, i - 1 \, a - i + 1}, $$ so that $$ \mathcal{N}_{\nu 
 \, \delta \, i \, a - i} > 0. $$ From the definition of our $\mathcal{N}$-sets, there exists a partition $\kappa$ such that $$ \delta \subseteq 
 \kappa \subseteq \nu, $$ and hence 
\begin{equation}\label{deltacontained}
 \delta \subseteq (a+2, 2^{\mathcal{S}-1}, 1^{c+2-2\mathcal{S}}).
\end{equation}
 Since $|\delta| = M = c + 2$, we can conclude, from \eqref{deltacontained}, that the number of boxes of $\nu$ below the first row is
 $2 (\mathcal{S}-1) + (c + 2 - 2 \mathcal{S}) =c$. So, a given subpartition of $\nu$ of size $c+2$ necessarily consists of $c$ boxes
 under the initial row and $2$ boxes in the initial row. The unique such partition is $$ \delta = (2^\mathcal{S}, 1^{c + 2 - 
 2\mathcal{S}}) = \delta^{\ast}, $$ i.e., so that every element in $\mathcal{J}_{d}^{-}(\nu)$ is of the form $(\delta^{\ast}, i, r)$.
 
 Writing $\delta = \delta^{\ast}$, we see that $ \nu / \delta^{\ast} $ consists of $a$ boxes, and all of these boxes are in the initial row. 
 So, for $i$ such that $0 \leq i \leq a$, there exists a unique intermediate partition $$ \kappa^{(i)} = \big( i + 2, 2^{\mathcal{S}-1}, 
 1^{c + 2 - 2 \mathcal{S}} \big) $$ such that $\kappa^{(i)} / \delta^{\ast}$ is a horizontal strip of size $i$ and $\nu / \kappa^{(i)}$ is a 
 horizontal strip of size $a - i$. We thus obtain that $$ \mathcal{N}_{\nu \, \delta^{\ast} \, i \, a - i} = 1 $$ for $i$ such that $0 \leq i \leq 
 a$. Also, if $i \geq 1$, then $$ \mathcal{N}_{\nu \, \delta^{\ast} \, i - 1 \, a - i + 1 } = 1.$$ From the requirement that $$ \mathcal{N}_{\nu 
 \, \delta^{\ast} \, i \, a - i } > \mathcal{N}_{\nu \, \delta^{\ast} \, i - 1 \, a - i +1}, $$ we find that $ i = 0$. 

 Since $b =2$, we find that $$ g_{(M-r,r) \, (b-1 \, 1^{c+1}) \, \delta^{\ast} } = g_{(M-r, r) \, (1^{M}) \, \delta^{\ast}}. $$ The Kronecker 
 coefficient identity in \eqref{gsignrep} associated with sign representations allows us to conclude that $$ g_{(M-r,r) \, (1^M) \, 
 \delta^{\ast} } = 1 \Longleftrightarrow \delta^{\ast} = (M-r,r)^{\operatorname{t}}. $$ Since $$ (M-\mathcal{S}, 
 \mathcal{S})^t=(2^{\mathcal{S}},1^{M-2\mathcal{S}})=\delta^\ast, $$ we can conclude that the biconditional equivalence 
 $$ \delta^\ast=(M-r,r)^t \Longleftrightarrow r=\mathcal S $$ holds. Consequently, the unique element in 
 the index set $\mathcal{J}_{d}^{-}(\nu)$ is $(\delta^{\ast}, 0, \mathcal{S})$. 

 So, it remains to evaluate the unique term in $\operatorname{triple}^{(4)}_{(d,e)\,(a,b,1^c)\,\nu}$ corresponding to $(\delta^\ast, 0, 
 \mathcal S)$. Since $i=0$, this term is $$ c_{(a,0) \, \delta^\ast}^{\nu} \, g_{(M-\mathcal S,\mathcal S)\,(1^M)\,\delta^\ast}, $$ with 
 the reduction $(a,0)=(a)$. Since $\nu/\delta^\ast$ consists of $a$ boxes, with each such box being in the initial row, the Pieri rule gives 
 us that $ c_{(a)\,\delta^\ast}^{\nu}=1$. Moreover, from the sign representation identity in \eqref{gsignrep}, the equality 
 $$ g_{ (M - \mathcal{S}, \mathcal{S}) \, (1^{M}) \, \delta^{\ast} } = \text{{\bf 1}}_{ (M - \mathcal{S}, \mathcal{S} )^{\operatorname{t}} 
 = \delta^{\ast}} $$ holds true. Since $$ (M - \mathcal{S}, \mathcal{S})^{\operatorname{t}} = (2^{\mathcal{S}}, 1^{M - 2 \mathcal{S}}) = 
 (2^{\mathcal{S}}, 1^{c + 2 - 2\mathcal{S}}) = \delta^\ast, $$ we may obtain that $ g_{(M - \mathcal{S}, \mathcal{S}) \, (1^M) \, 
 \delta^\ast}=1$. As a consequence, the unique term is $1\cdot 1=1$, and hence the equality 
 $ |\mathcal J_d^{-}(\nu)|=1$. So, the unique term in $\operatorname{triple}^{(4)}_{(d,e)\,(a,b,1^c)\,\nu}$
 is equal to $1$, as required. 
\end{proof}

\begin{example}
 Let $a = 2$, $b = 2$, $c = 2$, $d = 4$, $e = 2$, $\mathcal{S} = 2$ (with $\nu = (a+2, 2^{\mathcal{S}-1}, 1^{c+2-2\mathcal{S}})$ as 
 in Lemma \ref{Lemsize1} and with $N = n - b +1$ and $M = n - a$ as before). In this case, 
 the conditions in Lemma \ref{Lemsize1} hold, and we obtain the singleton set
\begin{equation}\label{single2202} 
 \mathcal{J}_{d}^{-} = \big\{ (22, 0, 2) \big\}. 
\end{equation}
 In this case, the unique term corresponding to the singleton index set in \eqref{single2202} is $$ c_{2 \, 22}^{42} g_{22 \, 1111 \, 22} = 
 1, $$ i.e., with $ c_{2 \, 22}^{42} = 1$ and $g_{22 \, 1111 \, 22} = 1$. 
\end{example}

\begin{lemma}\label{LemLR1}
Let $$ \nu = (a+2, 2^{\mathcal{S} - 1},1^{c+2 - 2 \mathcal{S}}). $$ If $(\eta,0,r) \in \mathcal J_d^{+}(\nu)$, 
 then $$ c_{\eta,(1)}^{\nu}=1. $$
\end{lemma}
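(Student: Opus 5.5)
The plan is to reduce the claim to a direct Pieri-rule count. This is the setting of Lemma \ref{Lemsize1}, so $b=2$, $\lfloor \frac{b-1}{2} \rfloor = 0$, and $N = n-1$. Hence $j=0$ is the only possible value, and $(b-1-j,j) = (1)$.

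First step: by Lemma \ref{LrtoNN} with $j=0$ and the convention $\mathcal{N}_{\nu \, \eta \, -1 \, b} = 0$, we get
$$ c_{\eta \, (1)}^{\nu} = \mathcal{N}_{\nu \, \eta \, 0 \, 1}. $$
A horizontal strip of size $0$ forces $\kappa = \eta$. So $\mathcal{N}_{\nu \, \eta \, 0 \, 1}$ equals $1$ when $\nu/\eta$ is a single box, and $0$ otherwise. The same follows directly from the Pieri rule: $s_\eta s_{(1)}$ is multiplicity-free, with $c_{\eta\,(1)}^{\nu} = \mathbf{1}_{\eta \subseteq \nu,\, |\nu/\eta| = 1}$.

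Second step: use membership in $\mathcal{J}_d^{+}(\nu) \subseteq \mathcal{I}^{+}(\nu)$. This membership gives
$$ \mathcal{N}_{\nu \, \eta \, 0 \, 1} > \mathcal{N}_{\nu \, \eta \, -1 \, 2} = 0. $$
Equivalently, by Lemma \ref{iffpos}, $c_{\eta\,(1)}^{\nu}\, g_{(N-r,r)\,(a,1^{c+1})\,\eta} > 0$. Either way, $\eta$ is obtained from $\nu$ by removing one corner box. Since $\mathcal{N}$ takes only the values $0$ and $1$ here, we conclude $c_{\eta\,(1)}^{\nu} = 1$.

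As an optional concrete check, one can list the possible $\eta$. The shape $\nu = (a+2, 2^{\mathcal{S}-1}, 1^{c+2-2\mathcal{S}})$ has at most three removable corners: the end of row $1$, the end of the last row of length $2$ (when $\mathcal{S} \geq 2$), and the bottom box of the column of $1$'s (when $c+2-2\mathcal{S} \geq 1$). Each such $\eta$ has $c_{\eta\,(1)}^\nu = 1$ by the Pieri rule.

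The only delicate point is the bookkeeping. One must confirm that $b=2$ forces $j=0$ in the index set $\mathcal{J}_d^{+}(\nu)$, and that the convention for $\mathcal{N}$ with a negative strip size is applied correctly. No genuine obstacle is expected.
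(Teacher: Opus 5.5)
Your proposal is correct and follows the paper's own argument. The paper likewise uses $\mathcal{J}_d^{+}(\nu)\subseteq\mathcal{I}^{+}(\nu)$ to get $\mathcal{N}_{\nu\,\eta\,0\,1}>\mathcal{N}_{\nu\,\eta\,-1\,2}=0$, concludes that $\nu/\eta$ is a single box, and then applies the Pieri rule to obtain $c_{\eta\,(1)}^{\nu}=1$, with $b=2$ implicit just as you assume.
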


\begin{proof}
 Let $\nu = (a+2,2^{\mathcal{S}-1}, 1^{c+2-2\mathcal{S}})$, and assume that $$ (\eta, 0, r) \in \mathcal{J}_{d}^{+}(\nu). $$ From the 
 containment $\mathcal{J}_{d}^{+} \subseteq \mathcal{I}^{+}(\nu)$, we may deduce that
\begin{equation}\label{deducecalN}
 \mathcal{N}_{\nu \, \eta \, 0 \, 1} > \mathcal{N}_{\nu \, \eta \, -1 \, 2}, 
\end{equation}
 with the right-hand side of \eqref{deducecalN} vanishing by convention. Consequently, we obtain the positivity of $$ \mathcal{N}_{\nu \, 
 \eta \, 0 \, 1} > 0, $$ and this gives us that $\nu/\eta$ is a single-box horizontal strip, so that the Pieri rule allows us to evaluate 
 $c_{\eta \, (1)}^{(a + 
 2, 2^{\mathcal{S}-1}, 1^{c+2-2\mathcal{S}})} = 1$. 
\end{proof}

\begin{example}
 Setting $a = 2$, $c = 3$, and $\mathcal{S} = 2$, we find that $\nu = 421$. Setting $\eta = 321$, this is obtained by removing a single box 
 from the first row of $\nu$. The Pieri rule then gives us that $c_{\eta \, 1}^{\nu} = 1$, in accordance with Lemma \ref{LemLR1}. 
\end{example}

\begin{lemma}\label{beforeinterpret}
 Let $a \geq 2$, $b = 2$, $c \geq 1$, $d$, $e$ and $\mathcal{S}$ be integers, with $a+b+c = d + e = n$ and $d \geq e$ as above. 
 Moreover, let $1 \leq \mathcal{S} \leq \lfloor \frac{c+2}{2} \rfloor$, and suppose that $\max(a + \mathcal{S}, c + 2 - \mathcal{S}) \leq 
 d \leq a + c + 2 -\mathcal{S}$. Then 
\begin{multline*}
 g_{(d, e) \, (a, 2, 1^c) \, (a+2, 2^{\mathcal{S}-1}, 1^{c+2-2\mathcal{S}})} = \\ 
 - 1 + \sum_{ (\eta, 0, r) \in \mathcal{J}^{+}_{d}( a+2, 2^{\mathcal{S}-1}, 1^{c+2-2\mathcal{S}} ) } 
 g_{(n - 1 - r,r) \, (a, 1^{c+1}) \, \eta }.
\end{multline*}
\end{lemma}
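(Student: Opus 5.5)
The plan is to specialize Theorem \ref{triple3minus4} to $b=2$ and $\nu = (a+2, 2^{\mathcal{S}-1}, 1^{c+2-2\mathcal{S}})$. All hypotheses of that theorem ($d \geq e$, $a \geq b = 2$, $c \geq 1$, $a+b+c = d+e = n$) are assumed here. We therefore have
\[ g_{(d,e)\,(a,2,1^c)\,\nu} = \operatorname{triple}^{(3)}_{(d,e)\,(a,2,1^c)\,\nu} - \operatorname{triple}^{(4)}_{(d,e)\,(a,2,1^c)\,\nu}, \]
and the two pieces are treated separately.

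\textbf{The negative part.} The hypotheses of the present lemma are exactly those of Lemma \ref{Lemsize1}: the same bounds on $\mathcal{S}$, and the same interval condition on $d$. That lemma gives $|\mathcal{J}^{-}_{d}(\nu)| = 1$, with the unique summand equal to $1$. Hence $\operatorname{triple}^{(4)} = 1$, which accounts for the term $-1$.

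\textbf{The positive part.} With $b = 2$ we have $N = n - b + 1 = n-1$, and the index $j$ in $\mathcal{J}^{+}_{d}(\nu)$ ranges over $0 \leq j \leq \lfloor \frac{b-1}{2} \rfloor = 0$. So every element of $\mathcal{J}^{+}_{d}(\nu)$ has the form $(\eta, 0, r)$. The summand in \eqref{rewritetriple3} is then
\[ c^{\nu}_{\eta\,(1,0)}\, g_{(N-r,r)\,(a,1^{c+1})\,\eta}, \]
with $(1,0) = (1)$ and $(N-r,r) = (n-1-r,r)$. By Lemma \ref{LemLR1}, every $(\eta,0,r) \in \mathcal{J}^{+}_{d}(\nu)$ satisfies $c^{\nu}_{\eta\,(1)} = 1$. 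Thus $\operatorname{triple}^{(3)}$ equals the sum of $g_{(n-1-r,r)\,(a,1^{c+1})\,\eta}$ over $(\eta,0,r) \in \mathcal{J}^{+}_{d}(\nu)$, which is the stated sum. Subtracting the negative part gives the formula.

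\textbf{Main point of care.} The only step needing attention is bookkeeping: the index range $j \in \{0\}$ must really be forced, and the convention $\mathcal{N}_{\nu\,\eta\,-1\,2} = 0$ used in Lemma \ref{LemLR1} must be consistent with the definition of $\mathcal{I}^{+}(\nu)$. One should also confirm that Lemma \ref{Lemsize1} requires $a \geq 2$, matching the present hypothesis. Beyond that, the argument is a direct assembly of earlier results with no new cancellation.
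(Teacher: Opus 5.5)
Your proposal is correct and follows essentially the same route as the paper, which proves this lemma by combining Theorem \ref{triple3minus4}, Lemma \ref{Lemsize1} (so that $\operatorname{triple}^{(4)} = 1$) and Lemma \ref{LemLR1} (so that $c^{\nu}_{\eta\,(1)} = 1$ on $\mathcal{J}^{+}_{d}(\nu)$). Your remark that $b = 2$ forces $j = 0$ and $N = n-1$ is the only bookkeeping the paper leaves implicit.
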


\begin{proof}
 This follows in a direct fashion from Theorem \ref{triple3minus4}, Lemma \ref{Lemsize1}, and Lemma \ref{LemLR1}. 
\end{proof}

\begin{example}
 Let $a = 3$, $b = 2$, $c=2$, $d=5$, $e=2$, $\mathcal{S} = {2}$. The given conditions in Lemma \ref{beforeinterpret} hold. In this case, 
 we have that $$ g_{(d, e) \, (a, 2, 1^c) \, (a+2, 2^{\mathcal{S}-1}, 1^{c+2-2\mathcal{S}})} = g_{52 \, 3211 \, 52} = 0 $$ and that 
 $$ \mathcal{J}^{+}_{d} = \big\{ (42, 0, 2) \big\}, $$ with Lemma \ref{beforeinterpret} giving that $$ g_{52 \, 3211 \, 52} = -1 + g_{42 \, 
 3111 \, 42} = -1 + 1. $$ The unique Blasiak tableau corresponding to the valuation $g_{42 \, 3111 \, 42} = 1$ is 
\begin{equation*}
 T = \begin{ytableau} \overline{1} & 1 & 1 & \overline{2} \\ 1 & \overline{2} \end{ytableau}. 
\end{equation*}
 \end{example}

 So, if we let $\mathcal{B}_{\lambda \, h \, \nu}$ denote the set of Blasiak tableaux associated with $g_{\lambda \, h \, \nu}$, this leads us 
 toward the following result. By construction, 
 the expression $ \left| \mathcal{B}_{ (n - 1 - r,r) \, (a, 1^{c+1}) \, \eta } \right| $ in the below summand
 is guaranteed to be positive, for tuples $(\eta, 0, r)$ in the specified index
 set of the form $\mathcal{J}_{d}^{+}$. 

\begin{theorem}\label{mainresult}
 Let $a \geq 2$, $b = 2$, $c \geq 1$, $d$, $e$ and $\mathcal{S}$ be integers, with $a+b+c = d + e = n$ and $d \geq e$. Moreover, 
 let $1 \leq \mathcal{S} \leq \lfloor \frac{c+2}{2} \rfloor$, and suppose that $\max(a + \mathcal{S}, c + 2 - \mathcal{S}) \leq d \leq a + c 
 + 2 -\mathcal{S}$. Then $$ g_{(d, e) \, (a, 2, 1^c) \, (a+2, 2^{\mathcal{S}-1}, 1^{c+2-2\mathcal{S}})} = \\ 
 - 1 + \sum_{\substack{ (\eta, 0, r) \in \\ \mathcal{J}^{+}_{d}( a+2, 2^{\mathcal{S}-1}, 1^{c+2-2\mathcal{S}} ) } } 
 \left| \mathcal{B}_{ (n - 1 - r,r) \, (a, 1^{c+1}) \, \eta } \right|, $$
 with each set of Blasiak tableaux of the form $ \mathcal{B}_{ (n - 1 - r,r) \, (a, 1^{c+1}) \, \eta }$ being nonempty. 
\end{theorem}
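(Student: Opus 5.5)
The plan is to derive Theorem \ref{mainresult} directly from Lemma \ref{beforeinterpret} by replacing each Kronecker coefficient in its summation with a count of Blasiak tableaux via \eqref{Blasiakinterpret}, and then checking nonemptiness using Lemma \ref{iffpos}.

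First I will confirm that the hypotheses of Theorem \ref{mainresult} coincide with those of Lemma \ref{beforeinterpret}. They do: $b = 2$, $a \geq 2$, $c \geq 1$, $d \geq e$, $a+b+c = d+e = n$, $1 \leq \mathcal{S} \leq \lfloor \frac{c+2}{2} \rfloor$, and $\max(a+\mathcal{S}, c+2-\mathcal{S}) \leq d \leq a+c+2-\mathcal{S}$. So Lemma \ref{beforeinterpret} gives
\[
g_{(d, e) \, (a, 2, 1^c) \, \nu} = -1 + \sum_{(\eta, 0, r) \in \mathcal{J}^{+}_{d}(\nu)} g_{(n-1-r, r) \, (a, 1^{c+1}) \, \eta},
\]
where $\nu = (a+2, 2^{\mathcal{S}-1}, 1^{c+2-2\mathcal{S}})$.

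Next, for each summand I will apply Blasiak's rule \eqref{Blasiakinterpret}. The second index $(a, 1^{c+1})$ is a hook of size $a + c + 1 = n-1 = N$ with leg length $c+1$, so in the notation of \eqref{Blasiakinterpret} we take $d = c+1$ (the total color) and $N$ in place of $n$. The first index $(n-1-r, r)$ is then the content partition and $\eta$ is the shape. By the $\mathfrak{S}_3$-symmetry \eqref{gsym}, the ordering of the indices is immaterial. Hence
\[
g_{(n-1-r, r) \, (a, 1^{c+1}) \, \eta} = \left| \mathcal{B}_{(n-1-r, r) \, (a, 1^{c+1}) \, \eta} \right|
\]
by the definition of $\mathcal{B}$, and substituting these counts gives the displayed formula.

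For nonemptiness, take $(\eta, 0, r) \in \mathcal{J}^{+}_{d}(\nu) \subseteq \mathcal{I}^{+}(\nu)$. Lemma \ref{iffpos} shows that $c_{\eta \, (1)}^{\nu} \, g_{(N-r, r) \, (a, 1^{c+1}) \, \eta} > 0$; here $j = 0$ and $b - 1 - j = 1$, so the LR factor is $c_{\eta \, (1)}^{\nu}$. In particular the Kronecker factor is positive. Since $N = n-1$, this is exactly $\left| \mathcal{B}_{(n-1-r, r) \, (a, 1^{c+1}) \, \eta} \right| > 0$. Lemma \ref{LemLR1} additionally records that the LR factor equals $1$, which is why no LR multiplicity appears in the sum.

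The main obstacle is bookkeeping: making sure the parameters in \eqref{Blasiakinterpret} match the present indexing. Specifically, the hook $(a, 1^{c+1})$ must be written in the form $(N - d', 1^{d'})$ with $d' = c+1$ and $N - d' = a$. I will also check that the range $0 \le r \le \lfloor N/2 \rfloor$ used in $\mathcal{J}^{+}_d$ is compatible with the hypotheses under which Lemma \ref{iffpos} was stated, namely $a \geq 1$, $b \geq 1$ and $c \geq 0$, all of which hold here.
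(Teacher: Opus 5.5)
Your proposal is correct and is essentially the paper's proof: the paper likewise gets the identity by combining Lemma~\ref{beforeinterpret} with Blasiak's rule \eqref{Blasiakinterpret}, reading the hook $(a,1^{c+1})$ as $(N-(c+1),1^{c+1})$ with $N=n-1$, content $(n-1-r,r)$ and shape $\eta$. It then deduces nonemptiness of each $\mathcal{B}_{(n-1-r,r)\,(a,1^{c+1})\,\eta}$ from the positivity that Lemma~\ref{iffpos} gives for triples in $\mathcal{J}^{+}_{d}(\nu)\subseteq\mathcal{I}^{+}(\nu)$; your appeal to $\mathfrak{S}_3$-symmetry is harmless but unnecessary, since $\mathcal{B}$ is indexed in the same order as in \eqref{Blasiakinterpret}.
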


\begin{proof}
 This follows from Lemmas \ref{iffpos}
 and \ref{beforeinterpret} and from Blasiak's combinatorial interpretation. 
\end{proof}

\begin{example}\label{exaftermain}
 Set $a = 6$, $b = 2$, $c = 6$, $d = 8$, $e = 6$, and $\mathcal{S} = 2$. In this case, we have that $$ \mathcal{J}_{d}^{+}(821111) = 
 \big\{ (721111, 0, 5), (721111, 0, 6) \big\}, $$ so that $$ g_{86 \, 62111111 \, 821111} = - 1 + g_{85 \, 61111111 \, 721111} + g_{76 \, 
 61111111 \, 721111}. $$ The unique Blasiak tableau corresponding to the valuation 
 $g_{85 \, 61111111 \, 721111} = 1$ is
 $$ { \begin{ytableau} \bar{1} & 1 & 1 & \bar{2} & 2 & 2 & 2\\ \bar{1} & \bar{2}\\ \bar{1}\\ \bar{1}\\ \bar{1}\\ 1 \end{ytableau}} $$
 and the unique Blasiak tableau corresponding to the valuation 
 $g_{76 \, 61111111 \, 721111} = 1$ is 
 $$ \begin{ytableau} \bar{1} & 1 & 1 & \bar{2} & 2 & 2 & 2\\ \bar{1} & \bar{2}\\ \bar{1}\\ \bar{1}\\ \bar{1}\\ 2 \end{ytableau}. $$
\end{example}

 We may reformulate Theorem \ref{mainresult}, in an equivalent way, by defining the disjoint union 
\begin{equation*}
 \mathcal{U}_{d}^{+}\big( a + 2, 2^{\mathcal{S} - 1}, 1^{c+2-2\mathcal{S}} \big) 
 := \bigsqcup_{ (\eta, 0, r) \in \mathcal{J}_{d}^{+}( a + 2, 2^{\mathcal{S} - 1}, 
 1^{c + 2 - 2 \mathcal{S}} ) } 
 \mathcal{B}_{(n - 1 - r, r) \, (a, 1^{c+1}) \, \eta }. 
\end{equation*}
 From Lemma \ref{iffpos}, we find that every component in the above disjoint union is nonempty.
 Fix a total order on the right-hand side of the above equality, 
 by lexicographically sorting tableaux according to partition shapes
 and then by their row-reading words. Let $T_{\operatorname{min}}$ denote the 
 (lexicographically) minimal element. Theorem \ref{mainresult} then gives us that 
 $$ g_{(d, e) \, (a, 2, 1^{c}) \, (a+2, 2^{\mathcal{S} -1}, 1^{c+2-2\mathcal{S}} ) } 
 = \big| \mathcal{U}_{d}^{+}\big( a + 2, 2^{\mathcal{S} - 1}, 1^{c + 2 - 2 \mathcal{S}} \big) 
 \setminus \{ T_{\operatorname{min}} \} \big|. $$ 
 This gives us an expression of the above Kronecker coefficient 
 as the cardinality of a naturally defined family of Blasiak tableaux. 
 
\subsection{Null index sets of the form $\mathcal{J}_{d}^{-}(\nu)$}

\begin{lemma}\label{nulllemma}
 Let $a \geq 2$, $b=2$, $c\geq 1$, and let $a+b+c=d+e=n$ with $d\geq e$. Let $1\leq \mathcal S\leq \lfloor \frac{c+2}{2} \rfloor$, and 
 set $$ \nu = (a+2,2^{\mathcal S-1},1^{c+2-2\mathcal S}). $$ If $$ d < \max(a+\mathcal S,c+2-\mathcal S)
 \quad\text{or}\quad
d>a+c+2-\mathcal S, $$ then $$ \mathcal J_d^{-}(\nu)=\varnothing. $$
Consequently,  the  vanishing relation 
$$ \operatorname{triple}^{(4)}_{(d,e)\,(a,2,1^c)\,\nu}=0. $$  holds. 
\end{lemma}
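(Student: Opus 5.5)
The plan is to reuse the structural analysis from the proof of Lemma \ref{Lemsize1}. That analysis never used the hypothesis on $d$ until the last step, so the defining condition of $\mathcal{J}_{d}^{-}(\nu)$ should then fail.

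First, I will show that $\mathcal{I}^{-}(\nu) = \{ (\delta^{\ast}, 0, \mathcal{S}) \}$, where $\delta^{\ast} = (2^{\mathcal{S}}, 1^{c+2-2\mathcal{S}})$ and $M = c+2$. This needs no assumption on $d$.

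\begin{itemize}
\item Take $(\delta, i, r) \in \mathcal{I}^{-}(\nu)$. The inequality $\mathcal{N}_{\nu \, \delta \, i \, a-i} > \mathcal{N}_{\nu \, \delta \, i-1 \, a-i+1}$ forces $\mathcal{N}_{\nu \, \delta \, i \, a-i} > 0$, hence $\delta \subseteq \nu$.
\item $\nu$ has exactly $c$ boxes below its first row and $|\delta| = c+2$. So $\delta$ must contain all of those boxes plus exactly two boxes of the first row, which gives $\delta = \delta^{\ast}$.
\item $\nu/\delta^{\ast}$ lies entirely in the first row. Hence $\mathcal{N}_{\nu \, \delta^{\ast} \, i \, a-i} = 1$ for $0 \le i \le a$, and $\mathcal{N}_{\nu \, \delta^{\ast} \, i-1 \, a-i+1} = 1$ for $i \ge 1$. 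The strict inequality therefore forces $i = 0$.
\item Since $b = 2$, the Kronecker factor is $g_{(M-r,r) \, (1^{M}) \, \delta^{\ast}}$. By \eqref{gsignrep} it is positive only when $(M-r,r)^{\operatorname{t}} = \delta^{\ast}$, i.e.\ $r = \mathcal{S}$. This is exactly the positivity of $\Xi^{[1]}_{\delta^{\ast}}(r,c)$ encoded in $\mathcal{I}^{-}(\nu)$, via Lemma \ref{iffneg}.
\end{itemize}

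Second, since $\mathcal{J}_{d}^{-}(\nu) \subseteq \mathcal{I}^{-}(\nu)$, membership of $(\delta^{\ast}, 0, \mathcal{S})$ in $\mathcal{J}_{d}^{-}(\nu)$ requires
\[
\max(a - 0 + \mathcal{S}, \, 0 + M - \mathcal{S}) \le d \le a + M - 0 - \mathcal{S}.
\]
Substituting $M = c+2$, this reads
\[
\max(a+\mathcal{S}, \, c+2-\mathcal{S}) \le d \le a + c + 2 - \mathcal{S}.
\]
The hypothesis of the lemma is precisely the negation of this interval condition, so the single candidate is excluded and $\mathcal{J}_{d}^{-}(\nu) = \varnothing$.

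Finally, $\operatorname{triple}^{(4)}$ is defined in \eqref{rewritetriple4} as a sum over $\mathcal{J}_{d}^{-}(\nu)$. It is therefore an empty sum and equals $0$.

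The only delicate step is the first one: the claim that $\mathcal{I}^{-}(\nu)$ reduces to a single element independently of $d$. One must check that the Lemma \ref{Lemsize1} argument genuinely never invoked condition 2. Inspecting that proof shows the $d$-condition is used only when passing from $\mathcal{I}^{-}$ to $\mathcal{J}^{-}_d$, so the argument transfers verbatim.
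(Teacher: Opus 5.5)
Your route is the paper's route: the paper's proof cites the Lemma \ref{Lemsize1} analysis to isolate $(\delta^{\ast},0,\mathcal{S})$ as the only possible element of $\mathcal{I}^{-}(\nu)$, and then notes that the $d$-interval fails. However, the inference in your second bullet, carried over verbatim from the proof of Lemma \ref{Lemsize1}, is false as stated. The conditions $\delta \subseteq \nu$ and $|\delta| = c+2$ do not force $\delta = \delta^{\ast}$. Knowing that $\nu$ has only $c$ boxes below its first row gives $\delta_1 \geq 2$, not $\delta_1 = 2$.

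For example, take $a=2$, $c=1$, $\mathcal{S}=1$, so that $\nu=(4,1)$ and $M=3$.
\begin{itemize}
\item $\delta=(3)\subseteq\nu$ and $\delta\in\operatorname{DH}(3)$.
\item $(4,1)/(3)$ is a horizontal strip of size $2$, so $\mathcal{N}_{\nu \, (3) \, 0 \, 2} = 1 > 0 = \mathcal{N}_{\nu \, (3) \, -1 \, 3}$.
\end{itemize}
So $((3),0,r)$ passes every condition you have invoked by the end of your second bullet. Only the Kronecker factor excludes it, since $\Xi^{[1]}_{(3)}(r,1)=0$ for $r\in\{0,1\}$. The delicate point is therefore not whether condition 2 of Lemma \ref{Lemsize1} was used; it is this counting claim, and "transfers verbatim" transfers the flaw too.

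The repair is to apply your fourth bullet first, to an arbitrary $(\delta,i,r)\in\mathcal{I}^{-}(\nu)$.
\begin{itemize}
\item Membership gives $\delta\in\operatorname{DH}(M)$ and $\Xi^{[1]}_{\delta}(r,c)>0$.
\item Hence $g_{(M-r,r)\,(1^{M})\,\delta}>0$ by Lemma \ref{gposLem}.
\item Hence $\delta=(M-r,r)^{\operatorname{t}}=(2^{r},1^{M-2r})$ by \eqref{gsignrep}, and in particular $\delta_1\leq 2$.
\end{itemize}
Now $\delta\subseteq\nu$ puts at most $c$ boxes of $\delta$ below row one and at most $2$ in it. So $|\delta|=c+2$ forces equality in both, which gives $\delta=\delta^{\ast}$ and therefore $r=\mathcal{S}$; this also rules out $r=0$. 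Your third bullet then yields $i=0$. The rest of your argument is correct as written: the $d$-interval computation with $M=c+2$, and $\operatorname{triple}^{(4)}$ being an empty sum.
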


\begin{proof}
 For $\nu$ as specified, the same argument as in Lemma \ref{Lemsize1} gives us that: 
 Before enforcing 
 the $d$-interval condition in the definition of $\mathcal J_d^{-}(\nu)$, the argument used in Lemma \ref{Lemsize1} shows 
 that the only possible candidate from the negative side is
$$ (\delta^\ast,0,\mathcal S),
       \  \   \  \text{for}  \   \  \  \delta^\ast=(2^{\mathcal S},1^{c+2-2\mathcal S}). $$
 So, for the above 3-tuple, 
 we require that
 $$ \max(a + \mathcal{S}, c + 2 - \mathcal{S}) \leq d \leq a + c + 2 - \mathcal{S}. $$
 If $d$ is not in the interval suggested above, 
 the unique permissible triple would not be in $\mathcal{J}_{d}^{-}(\nu)$. 
\end{proof}

\begin{example}
 Let $n = 8$, with $a = 3$, $b = 2$, $c = 3$, $d = 4$, $e = 4$, and $\mathcal{S} = 2$, giving us that $\nu = 521$. We may check that the 
 desired conditions in Lemma \ref{nulllemma} hold. In this case,   $\mathcal{J}_{d}^{+}(\nu)$ is a singleton set 
 consisting of $(421, 0, 3)$, and we find that the 
 sum of the form $ \operatorname{triple}^{(3)} $ reduces to 
 $$ c_{421 \, 1}^{521} g_{43 \, 31111 \, 421} = 1 \cdot 1. $$
 The unique Blasiak tableau corresponding to the valuation 
 $ g_{43 \, 31111 \, 421} = 1 $ is 
 $$ \begin{ytableau} \overline{1} & 1 & 1 & \overline{2} \\ \overline{1} & \overline{2} \\ 2 \end{ytableau}. $$
 \end{example}

\begin{lemma}\label{nullsecondlemma}
 Let $a \geq 2$, $b=2$, $c\geq 1$, and let $a+b+c=d+e=n$ with $d\geq e$.
 Suppose that $1\leq \mathcal{S}  \leq \lfloor \frac{c+2}{2} \rfloor$
 and that $ d < \max(a+\mathcal{S}, c + 2 - \mathcal{S})$ or 
$d>a+c+2-\mathcal S$. 
 Then the relation 
 $$ g_{(d, e) \, (a, 2, 1^c) \, (a + 
 2, 2^{\mathcal{S} - 
 1},1^{c+2-2\mathcal{S}})} = 
 \sum_{ (\eta, 0, 
 r) \in \mathcal{J}^{+}_{d}(a + 
 2,2^{\mathcal{S} -   1},1^{c+2-2\mathcal{S}}) } 
 g_{(n - 1 - r, r) \, (a, 1^{c+1}) \, \eta }   $$ holds true. 
\end{lemma}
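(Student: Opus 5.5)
The plan is to combine Theorem \ref{triple3minus4} with Lemma \ref{nulllemma} and then simplify the surviving positive sum as in Lemma \ref{beforeinterpret}. Set $\nu = (a+2, 2^{\mathcal{S}-1}, 1^{c+2-2\mathcal{S}})$. With $b = 2$, $c \geq 1$, $a \geq b$ and $d \geq e$, Theorem \ref{triple3minus4} gives
$$ g_{(d,e)\,(a,2,1^c)\,\nu} = \operatorname{triple}^{(3)}_{(d,e)\,(a,2,1^c)\,\nu} - \operatorname{triple}^{(4)}_{(d,e)\,(a,2,1^c)\,\nu}. $$
The hypotheses on $d$ are exactly those of Lemma \ref{nulllemma}. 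That lemma gives $\mathcal{J}_d^{-}(\nu) = \varnothing$, so $\operatorname{triple}^{(4)} = 0$. The whole coefficient therefore equals $\operatorname{triple}^{(3)}$.

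Next we simplify $\operatorname{triple}^{(3)}$ as in \eqref{rewritetriple3}. Since $b = 2$, the index $j$ ranges over $0 \leq j \leq \lfloor \frac{1}{2} \rfloor = 0$. Hence every element of $\mathcal{J}_d^{+}(\nu)$ has the form $(\eta, 0, r)$, and the partition $(b-1-j, j)$ reduces to $(1)$. Also $N = n - b + 1 = n - 1$, so the Kronecker factor in each summand is $g_{(n-1-r, r)\,(a, 1^{c+1})\,\eta}$.

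Finally, Lemma \ref{LemLR1} gives $c_{\eta\,(1)}^{\nu} = 1$ for every $(\eta, 0, r) \in \mathcal{J}_d^{+}(\nu)$. That lemma was stated without any restriction on $d$, and its proof only uses the containment $\mathcal{J}_d^{+}(\nu) \subseteq \mathcal{I}^{+}(\nu)$ and the Pieri rule, so it applies here. Substituting these values yields the claimed identity.

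The only delicate point is confirming that Lemma \ref{nulllemma} covers both out-of-range alternatives for $d$. In its proof, $(\delta^\ast, 0, \mathcal{S})$ is the unique candidate in $\mathcal{I}^{-}(\nu)$, and the interval condition defining $\mathcal{J}_d^{-}(\nu)$ for this triple is exactly $\max(a + \mathcal{S}, c + 2 - \mathcal{S}) \leq d \leq a + c + 2 - \mathcal{S}$. Either alternative in our hypothesis violates this condition, so the triple is excluded. With that checked, the rest is direct substitution.
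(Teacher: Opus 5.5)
Your proposal is correct and follows the paper's own proof: you combine Theorem \ref{triple3minus4} with Lemma \ref{nulllemma} to get $\operatorname{triple}^{(4)}=0$, and you use Lemma \ref{LemLR1} to set each factor $c_{\eta\,(1)}^{\nu}$ in $\operatorname{triple}^{(3)}$ equal to $1$. Your remarks that $b=2$ forces $j=0$ and $N=n-1$, and that Lemma \ref{LemLR1} places no restriction on $d$, make explicit what the paper's one-line proof leaves implicit.
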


\begin{proof}
 This follows in a direct way from Theorem \ref{triple3minus4}, Lemma \ref{LemLR1},   and Lemma \ref{nulllemma}. 
\end{proof}

\begin{example}
 Let $n = 9$, with $a = 3$, $b = 2$, $c = 4$, $d = 5$, $e = 4$, and $\mathcal{S} = 3$, giving us that $\nu = 522$. We may verify that the 
 desired conditions in Lemma \ref{nullsecondlemma} hold, so that the main identity in Lemma \ref{nullsecondlemma} reduces to 
\begin{equation}\label{nullreduces}
 g_{54 \, 321111 \, 522} = g_{53 \, 311111 \, 422} = 1, 
\end{equation}
 noting that all of the indices on the left-hand Kronecker coefficient in \eqref{nullreduces} are non-hooks. The unique Blasiak tableau 
 corresponding to the valuation $g_{53 \, 311111 \, 422} = 1$ is $$ \begin{ytableau} \overline{1} & 1 & 1 & \overline{2} \\ 
 \overline{1} & \overline{2} \\ 1 & \overline{2} \end{ytableau}, $$ thus providing, via Lemma \ref{nullsecondlemma}, 
 a combinatorial interpretation for the leftmost Kronecker coefficient in \eqref{nullreduces}. 
\end{example}

 We thus arrive at the following counterpart to Theorem \ref{mainresult}, yielding another family of combinatorial interpretations, 
 which is manifestly positive, in this case. 

\begin{theorem}\label{mainresult2}
 Let $a \geq 2$, $b=2$, $c\geq 1$, and let $a+b+c=d+e=n$ with $d\geq e$. Suppose that $1\leq \mathcal S\leq \lfloor \frac{c + 
 2}{2} \rfloor$ and that $ d < \max(a+\mathcal S,c+2-\mathcal S)$ or $d > a + c + 2 - \mathcal{S}$. Then the relation $$ g_{(d, e) \, 
 (a, 2, 1^c) \, (a + 2, 2^{\mathcal{S} - 1}, 1^{c+2 - 2 \mathcal{S}})} = 
 \sum_{ (\eta, 0, r) \in \mathcal{J}^{+}_{d}(a+2,2^{\mathcal S-1},1^{c+2-2\mathcal S}) } 
 \left| \mathcal{B}_{(n - 1 - r, r) \, (a, 1^{c+1}) \, \eta } \right| 
 $$ holds true, 
 with each set of Blasiak tableaux of the form 
 $ \mathcal{B}_{(n - 1 - r, r) \, (a, 1^{c+1}) \, \eta } $ being nonempty. 
\end{theorem}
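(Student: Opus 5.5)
Theorem \ref{mainresult2} is a restatement of Lemma \ref{nullsecondlemma}, with each hook Kronecker coefficient replaced by its count of Blasiak tableaux. Write $\nu = (a+2, 2^{\mathcal{S}-1}, 1^{c+2-2\mathcal{S}})$. We start from Theorem \ref{triple3minus4}, which applies since $a \geq b = 2$ and $c \geq 1$; it gives $g_{(d,e)\,(a,2,1^c)\,\nu} = \operatorname{triple}^{(3)} - \operatorname{triple}^{(4)}$. Under the hypothesis that $d$ lies outside the interval $[\max(a+\mathcal{S}, c+2-\mathcal{S}), a+c+2-\mathcal{S}]$, Lemma \ref{nulllemma} gives $\mathcal{J}_d^{-}(\nu) = \varnothing$. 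Hence $\operatorname{triple}^{(4)} = 0$, and only the positive sum survives.

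Next we simplify $\operatorname{triple}^{(3)}$. Since $b = 2$, the index $j$ ranges over $0 \leq j \leq \lfloor 1/2 \rfloor = 0$, so every element of $\mathcal{J}_d^{+}(\nu)$ has the form $(\eta, 0, r)$. Also $N = n-1$, so the Kronecker factor is $g_{(n-1-r, r)\,(a, 1^{c+1})\,\eta}$. By Lemma \ref{LemLR1}, the LR factor $c_{\eta\,(1)}^{\nu}$ equals $1$ for every such triple. This recovers exactly the identity of Lemma \ref{nullsecondlemma}.

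Finally we apply Blasiak's rule \eqref{Blasiakinterpret} with hook $(a, 1^{c+1}) = (n-d', 1^{d'})$, where $d' = c+1$. By the $\mathfrak{S}_3$-symmetry \eqref{gsym}, the hook can be moved to the middle slot. This gives $g_{(n-1-r,r)\,(a,1^{c+1})\,\eta} = |\mathcal{B}_{(n-1-r,r)\,(a,1^{c+1})\,\eta}|$, with $\eta$ playing the role of the content partition or the shape as in the paper's convention. For nonemptiness, Lemma \ref{iffpos} shows that every $(\eta, 0, r) \in \mathcal{J}_d^{+}(\nu) \subseteq \mathcal{I}^{+}(\nu)$ satisfies $c_{\eta\,(1)}^{\nu}\, g_{(N-r,r)\,(a,1^{c+1})\,\eta} > 0$. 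So the Kronecker coefficient is positive and the corresponding Blasiak set is nonempty.

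No step here is a genuine obstacle, since all the substantive work is in Lemma \ref{nulllemma}. The one point that needs care is checking that the hypotheses of Lemma \ref{iffpos} hold: this requires $a \geq 1$, $b \geq 1$, $c \geq 0$, and the ranges of $r$ built into $\mathcal{J}_d^{+}$. The same range checks are needed when matching the parameter $d'$ in Blasiak's rule, so that $(a, 1^{c+1})$ is indeed the hook $(n - d', 1^{d'})$.
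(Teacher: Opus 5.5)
Your proposal is correct and follows the paper's route: the paper proves Lemma \ref{nullsecondlemma} from Theorem \ref{triple3minus4}, Lemma \ref{nulllemma} and Lemma \ref{LemLR1}, and then derives Theorem \ref{mainresult2} from that lemma, Lemma \ref{iffpos} (for nonemptiness) and Blasiak's rule, which is what you do. One bookkeeping fix: in \eqref{Blasiakinterpret} the size parameter is $N=n-1=a+c+1$, so the hook is $(N-d',1^{d'})$ with $d'=c+1$, not $(n-d',1^{d'})$; the content is $(n-1-r,r)$ and the shape is $\eta$, and no appeal to \eqref{gsym} is needed because the hook already sits in the middle slot.
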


\begin{proof}
 This follows from Lemmas \ref{nullsecondlemma} and \ref{iffpos} and from Blasiak's combinatorial interpretation. 
\end{proof}

\begin{example}
 A special case of Theorem \ref{mainresult2} gives us that 
\begin{equation}\label{anothergg1}
 g_{(10, 5) \, 72111111 \, 9222} 
 = g_{(10, 4) \, 71111111 \, 8222 } = 1, 
\end{equation} 
 noting the absence of hook partitions among the indices of the leftmost Kronecker coefficient in \eqref{anothergg1}. The unique 
 Blasiak tableau corresponding to the valuation $g_{(10, 4) \, 71111111 \, 8222 } = 1$ is $$ \begin{ytableau}
 \overline{1} & 1 & 1 & 1 & 1 & 1 & 1 & \overline{2} \\ \overline{1} & \overline{2} \\ \overline{1} & \overline{2} \\ 1 & \overline{2} 
 \end{ytableau}, $$ thus giving us, according to Theorem \ref{mainresult2}, 
 a combinatorial interpretation of the leftmost Kronecker coefficient in \eqref{anothergg1}. 
\end{example}

 It is possible to obtain refinements of Theorem \ref{mainresult2}, e.g., by determining conditions to evaluate the right-hand side of 
 the main identity in Theorem \ref{mainresult2} more explicitly, and it is possible to obtain many further results in the spirit of 
 Theorems \ref{mainresult} and \ref{mainresult2}, but we omit further considerations of this, for the time being, as we have 
 demonstrated how our method can be applied to obtain manifestly positive combinatorial interpretations building on Blasiak's 
 combinatorial interpretation, as above. 

\section{Conclusion}
 Our method reduces the difference of triple sums in Theorem \ref{triple3minus4} by identifying conditions under which the
 negative contribution \(\operatorname{triple}^{(4)}\) collapses to a given constant. 
 More broadly, the reduction framework of Theorem \ref{triple3minus4} suggests 
 further applications, for example through sign-reversing involutions or analogous analyses for other families of partition shapes.

\subsection*{Acknowledgements}
 The author thanks Mike Zabrocki for some useful discussions concerning Kronecker coefficients. The author used the GPT-5.3 model to 
 assist with the exposition and with exploratory calculations and to explore heuristic arguments, but 
 all mathematical proofs, results, exposition, etc.\ were/was written by the author and verified independently by the author.

\section{Appendix} 

\subsection{Insertion tableaux for $w^{(3)}$}\label{AppendInsert}
 The successive insertion tableaux corresponding to the colored word $w^{(3)}$ in \eqref{wAppendix} are given in full below. 
\begin{align*}
 \bar2 & \rightsquigarrow \begin{ytableau}\bar2\end{ytableau} \\ 
 \bar2\,1 & \rightsquigarrow \begin{ytableau}1 & \bar2\end{ytableau} \\ 
 \bar2\,1\,1 & \rightsquigarrow \begin{ytableau}1 & 1 & \bar2\end{ytableau} \\ 
 \bar2\,1\,1\,2 & \rightsquigarrow \begin{ytableau}1 & 1 & \bar2 & 2\end{ytableau} \\ 
 \bar2\,1\,1\,2\,1 & \rightsquigarrow \begin{ytableau}1 & 1 & 1 & \bar2\\ 2\end{ytableau} \\ 
 \bar2\,1\,1\,2\,1\,\bar1 & \rightsquigarrow \begin{ytableau}\bar1 & 1 & 1 & \bar2\\ 1\\ 2\end{ytableau} \\ 
 \bar2\,1\,1\,2\,1\,\bar1\,\bar3 & \rightsquigarrow \begin{ytableau}\bar1 & 1 & 1 & \bar2\\ 1\\ 2\\ \bar3\end{ytableau} \\ 
 \bar2\,1\,1\,2\,1\,\bar1\,\bar3\,\bar1 & \rightsquigarrow \begin{ytableau} \bar1 & 1 & 1 & \bar2\\ \bar1 & \bar3\\ 1\\ 2 \end{ytableau} 
\end{align*}

 \subsection{A further illustration of Theorem \ref{mainresult}}\label{Appendfurther}
 Setting $a = 2$, $b = 2$, $c = 5$, $d =6$, $e = 3$, and $\mathcal{S} = 2$ in Lemma \ref{beforeinterpret}, the $g$-coefficient on the 
 left of the main identity in Lemma \ref{beforeinterpret} reduces to $g_{ 63 \, 2211111 \, 42111 } = 1$. In this case, the index set
 $\mathcal{J}_{d}^{+}(\nu)$ evaluates as $$ \mathcal{J}_{d}^{+}(\nu) = \big\{ (32111, 0, 2), (32111, 0, 3) \big\}, $$ and Lemma 
 \ref{beforeinterpret} then gives us that $$g_{ 63 \, 2211111 \, 42111 } = -1 + g_{62 \, 2111111 \, 32111} + g_{53 \, 2111111 \, 
 32111}. $$ The unique Blasiak tableau associated with the valuation $g_{62 \, 2111111 \, 32111} = 1$ is $$ \begin{ytableau} 
 \overline{1} & 1 & \overline{2} \\ \overline{1} & \overline{2} \\ \overline{1} \\ \overline{1} \\ 1 \end{ytableau}. $$ The unique Blasiak 
 tableau associated with the valuation $ g_{53 \, 2111111 \, 32111} = 1$ is $$ \begin{ytableau} \overline{1} & 1 & \overline{2} \\ 
 \overline{1} & \overline{2} \\ \overline{1} \\ \overline{1} \\ 2 \end{ytableau}. $$ 

\bibliographystyle{plain}
\bibliography{seprefe}

\end{document}